\renewcommand*{\eqref}[1]{%
  \hyperref[{#1}]{\textup{\tagform@{\ref*{#1}}}}%
}
\newtheorem{theorem}{Theorem}[section]
\newtheorem{lemma}[theorem]{Lemma}
\newtheorem{conjecture}[theorem]{Conjecture}
\newtheorem{problem}[theorem]{Problem}
\newtheorem{definition}[theorem]{Definition}
\newcommand{\Cay}{\mathop{\Gamma}}
\newcommand{\Aut}{\mathop{\mathrm{Aut}}}
\def\Dic{{\rm Dic}}
\newcommand{\myitem}[1]{%
\item[#1]\protected@edef\@currentlabel{#1}%
}
\begin{document}

\title[Many suborbits of size at most 2]{Finite transitive groups having many suborbits of cardinality at most two and an application to the enumeration of Cayley graphs}

\author[P. Spiga]{Pablo Spiga}
\address{Pablo Spiga,
Dipartimento di Matematica e Applicazioni, University of Milano-Bicocca,\newline
Via Cozzi 55, 20125 Milano, Italy}\email{pablo.spiga@unimib.it}

\thanks{Address correspondence to P. Spiga,
E-mail: pablo.spiga@unimib.it.}

\begin{abstract}
Let $G$ be a finite transitive group on a set $\Omega$, let $\alpha\in \Omega$ and let $G_\alpha$ be the stabilizer of the point $\alpha$ in $G$. In this paper, we are interested in the proportion $$\frac{|\{\omega\in \Omega\mid \omega \textrm{ lies in a }G_\alpha\textrm{-orbit of cardinality at most two}\}|}{|\Omega|},$$ that is, the proportion of elements of $\Omega$ lying in a suborbit of cardinality at most two. We show that, if this proportion is greater than $5/6$, then each element of $\Omega$ lies in a suborbit of cardinality at most two and hence $G$ is classified by a result of Bergman and Lenstra. We also classify the permutation groups attaining the bound $5/6$. 

We use these results to answer a question concerning the enumeration of Cayley graphs. Given a transitive group $G$ containing a regular subgroup $R$, we determine an upper bound on the number of Cayley graphs on $R$ containing $G$ in their automorphism groups.
\end{abstract}

\keywords{suborbits, Cayley graph, automorphism group, asymptotic enumeration, graphical regular representation,}
\subjclass[2010]{05C25, 05C30, 20B25, 20B15}
\maketitle

\section{Introduction}\label{intro}
This paper is part of a series~\cite{MMS,MSMS,Bxu} aiming to obtain an asymptotic enumeration of finite Cayley graphs. However, the main players in this paper are not finite Cayley graphs, but finite transitive groups. Our results on finite transitive groups can then be used to make a considerable step towards the enumeration problem of Cayley graphs and thus getting closer to solving an outstanding question of Babai and Godsil, see~\cite{BaGo} or~\cite[Conjecture~3.13]{Go2}.

Let $G$ be a finite transitive group on $\Omega$, let $\alpha\in \Omega$ and let $G_\alpha$ be the stabilizer in $G$ of the point $\alpha$. The orbits of $G_\alpha$ on $\Omega$ are said to be the \textbf{\textit{suborbits}} of $G$ and their cardinalities are said to be the \textbf{\textit{subdegrees}} of $G$. In this paper, we are concerned in finite transitive groups having many subdegrees equal to $1$ or $2$. In particular, we are interested in the ratio
 $${\bf I}_\Omega(G):=\frac{|\{\omega\in \Omega\mid \omega \textrm{ lies in a }G_\alpha\textrm{-orbit of cardinality at most two}\}|}{|\Omega|}.$$
As $G$ is transitive on $\Omega$, the value of ${\bf I}_\Omega(G)$ does not depend on $\alpha$. Clearly, $0<{\bf I}_\Omega(G)\le 1$.
\begin{theorem}\label{thrm:main1}
Let $G$ be a finite transitive group on $\Omega$, let $\alpha\in \Omega$ and let $G_\alpha$ be the stabilizer in $G$ of the point $\alpha$. If 
${\bf I}_\Omega(G)>\frac{5}{6},$
then ${\bf I}_G(G_\alpha)=1$, that is, each suborbit of $G$ has cardinality at most $2$.
\end{theorem}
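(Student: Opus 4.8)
The plan is to recast the statement in graph-theoretic terms and then argue by contradiction. Define a graph $X$ on $\Omega$ by joining two distinct vertices $\omega,\eta$ exactly when $\eta$ lies in a $G_\omega$-orbit of cardinality at most $2$. This relation is symmetric: since $G$ is transitive, $|\eta^{G_\omega}|=|G_\omega:G_\omega\cap G_\eta|=|G_\eta:G_\omega\cap G_\eta|=|\omega^{G_\eta}|$ because $|G_\omega|=|G_\eta|$; and it is plainly $G$-invariant. Thus $X$ is a $G$-vertex-transitive graph whose valency $d$ satisfies $d+1={\bf I}_\Omega(G)\cdot|\Omega|>\tfrac56|\Omega|$, and the desired conclusion ${\bf I}_G(G_\alpha)=1$ says precisely that $X$ is complete. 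Assuming the contrary, the complement $Y:=\overline{X}$ is a nonempty $G$-vertex-transitive graph of valency $b:=|\Omega|-1-d<\tfrac16|\Omega|$, and $Y(\alpha)$ is exactly the union $\Omega_{\ge3}$ of the suborbits of $G_\alpha$ of cardinality at least $3$. So everything reduces to: if $\Omega_{\ge3}\neq\varnothing$, then $b\ge|\Omega|/6$.

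First I would pass to a faithful action (this changes neither the suborbits nor ${\bf I}_\Omega(G)$) and dispose of the case in which $G_\alpha$ has no subgroup of index $2$: then every suborbit of cardinality $\le2$ is a fixed point of $G_\alpha$, so $\Omega_{\le2}=\mathrm{Fix}_\Omega(G_\alpha)$ has cardinality $|N_G(G_\alpha):G_\alpha|$, whence ${\bf I}_\Omega(G)>5/6$ forces $|G:N_G(G_\alpha)|<6/5$, so $G_\alpha\trianglelefteq G$, so $G_\alpha=1$ and all suborbits are trivial. From now on $G_\alpha$ has an index-$2$ subgroup; write $D=G_\alpha'(G_\alpha)^2$, so $G_\alpha/D\cong\mathbf F_2^{\,r}$ with $r\ge1$ and the index-$2$ subgroups of $G_\alpha$ are the preimages $M_1,\dots,M_{2^r-1}$ of the hyperplanes. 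Comparing orders one checks that a point $\eta$ lies in a $G_\alpha$-orbit of cardinality $\le2$ iff some $M_i\le G_\alpha\cap G_\eta$ (if $|G_\alpha:G_\alpha\cap G_\eta|\le2$ then $G_\alpha\cap G_\eta$ equals $G_\alpha$ or some $M_i$; if $|G_\alpha:G_\alpha\cap G_\eta|\ge3$ then $|G_\alpha\cap G_\eta|<|M_i|$ for all $i$). Equivalently,
\[
\Omega_{\le2}=\bigcup_{i=1}^{2^{r}-1}\mathrm{Fix}_\Omega(M_i),\qquad\text{i.e.}\qquad
\Omega_{\ge3}=\bigcap_{i=1}^{2^{r}-1}\bigl(\Omega\setminus\mathrm{Fix}_\Omega(M_i)\bigr).
\]

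The core of the argument is then to bound the union $\bigcup_i\mathrm{Fix}_\Omega(M_i)$ from above. Each $\mathrm{Fix}_\Omega(M_i)$ contains $F:=\mathrm{Fix}_\Omega(G_\alpha)$, and the sets $\mathrm{Fix}_\Omega(M_i)\setminus F$ are pairwise disjoint (a point outside $F$ in $\mathrm{Fix}_\Omega(M_i)$ has $G_\alpha\cap G_\eta=M_i$ exactly), so $|\Omega_{\le2}|=|F|+\sum_i\bigl(|\mathrm{Fix}_\Omega(M_i)|-|F|\bigr)$, which must be shown to be at most $\tfrac56|\Omega|$ once a suborbit of cardinality $\ge3$ exists. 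Here I would use simultaneously: (i) the available $G$-invariant block systems — the connected components of $Y$ (which form a partition with $\Omega_{\ge3}$ confined to the block through $\alpha$, and on whose other blocks $G_\alpha$ has all orbits of cardinality $\le2$), the fibres of $\omega\mapsto G_\omega$ (block through $\alpha$ of size $|N_G(G_\alpha):G_\alpha|$ and setwise stabiliser $N_G(G_\alpha)$), and the fibres of $\omega\mapsto\langle M_i^{\,G_\omega}:i\rangle$ — to split the configuration; and (ii) the rigidity of $G_\alpha$: writing $K$ for the kernel of $G_\alpha$ on $\Omega_{\ge3}$, the group $K$ acts faithfully with all orbits of cardinality $\le2$ on $\Omega_{\le2}\setminus\{\alpha\}$, hence is an elementary abelian $2$-group, while $G_\alpha/K$ embeds in $\Sym(\Omega_{\ge3})$ and so is small when $\Omega_{\ge3}$ is. Feeding these restrictions into the displayed count — together with the Bergman–Lenstra structure theorem applied to the relevant quotient actions, and an induction on $|\Omega|$ through the block systems — should yield $|\Omega_{\ge3}|\ge|\Omega|/6$. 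Analysing the equality case $b=|\Omega|/6$ then identifies the groups with ${\bf I}_\Omega(G)=5/6$.

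The step I expect to be the genuine obstacle is this endgame: making the group structure and the global transitivity conflict exactly at the sharp constant $5/6$. One has to control, at the same time, how small a suborbit of cardinality $\ge3$ can be relative to $|\Omega|$ and how a near-elementary-abelian-$2$ point stabiliser spreads its fixed points over $\Omega$. I anticipate a dichotomy between the quasiprimitive case (handled directly, using that a small suborbit severely constrains a quasiprimitive group) and the imprimitive case (reduced via the block systems above to smaller transitive groups, to which either the bound applies inductively or Bergman–Lenstra applies), followed by a finite computation at small degrees — which is precisely where the extremal value $5/6$, presumably realised by a specific small wreath-type configuration, is attained.
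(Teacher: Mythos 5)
Your reduction is fine as far as it goes (the graph $X$ is indeed $G$-invariant and symmetric, the case where $G_\alpha$ has no index-$2$ subgroup is handled correctly, and the identity $\Omega_{\le 2}=\bigcup_i\mathrm{Fix}_\Omega(M_i)$ with the sets $\mathrm{Fix}_\Omega(M_i)\setminus\mathrm{Fix}_\Omega(G_\alpha)$ pairwise disjoint is correct), but the proof stops exactly where the theorem begins: you never establish the quantitative statement that $\Omega_{\ge 3}\neq\varnothing$ forces $|\Omega_{\ge3}|\ge|\Omega|/6$. The passage ``feeding these restrictions into the displayed count \dots should yield'' is a list of ingredients, not an argument, and you yourself flag this endgame as the genuine obstacle. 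Moreover the route you sketch for it is doubtful: ${\bf I}_\Omega(G)$ does not behave in any controlled way under passing to a block system or to the quotient action (the quotient can have all subdegrees $\le 2$ while the original does not, and conversely), the Bergman--Lenstra theorem only applies in the extreme case ${\bf I}=1$ and there is no evident way to force that hypothesis on the auxiliary actions you introduce, and ``a small suborbit severely constrains a quasiprimitive group'' is not a usable statement at the precision of the constant $5/6$. So the core of the theorem is missing, and the proposed plan for it would need substantial new ideas to be completed.

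For comparison, the paper's proof goes through a different and much more local analysis. Assuming only $|\Omega_{\alpha,1}|+|\Omega_{\alpha,2}|>|\Omega|/2$, it first shows (Lemma~\ref{lemma:-4}) that the only subdegrees are $1,2,4$ and that for $\beta\in\Omega_{\alpha,4}$ and $\omega\in\Omega_{\alpha,2}\cap\Omega_{\beta,2}$ one has the factorization $G_\omega=(G_\alpha\cap G_\omega)(G_\beta\cap G_\omega)$. Counting how many subgroups $G_\omega$ can arise this way (at most $3$ or $9$, by inspecting the lattice of the group induced on a suborbit of length $4$) and comparing with $|\Omega_{\alpha,2}\cap\Omega_{\beta,2}|$ yields, under the hypothesis $\ge 5|\Omega|/6$, that $x_4\le 2$, that $G_\alpha$ is elementary abelian of order $4$, and, via two auxiliary lemmas (Lemmas~\ref{lemma:44esteso} and~\ref{lemma:444esteso}), that $V:=G_\alpha\times G_\beta$ is an elementary abelian normal $2$-subgroup of order $16$. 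The problem then becomes a question about an orbit of a subgroup $H\le\mathrm{GL}_4(2)$ on $2$-dimensional subspaces, which is settled by an exhaustive machine computation showing the relevant proportion never exceeds $5/6$ (and pinning down the equality cases for Theorem~\ref{thrm:main2}). None of these structural steps --- subdegrees confined to $\{1,2,4\}$, $|G_\alpha|=4$, the normal subgroup of order $16$, the reduction to $\mathrm{GL}_4(2)$ --- is present or replaced by an equivalent in your proposal, so the gap is not a matter of polishing but of supplying the main argument.
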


It turns out that finite transitive groups $G$ with ${\bf I}_\Omega(G)=1$ are classified by a classical result of Bergman and Lenstra~\cite{BL}. The result of Bergman and Lenstra is rather general and applies to arbitrary  (i.~e.~not necessarily finite) groups. The proof of~\cite[Theorem~1]{BL} is very beautiful and it is based on certain equivalence relations; also the strengthening of Isaacs~\cite{isaacs} of the theorem of Bergman and Lenstra has a remarkably ingenious proof.

 From~\cite[Theorem~1]{BL}, finite transitive groups with ${\bf I}_\Omega(G)=1$ can be partitioned in three families
\begin{enumerate}[(a)]
\item finite transitive groups $G$ where the stabilizer $G_\alpha$ has order $1$,
\item  finite transitive groups $G$ where the stabilizer $G_\alpha$ has order $2$,
\item  finite transitive groups $G$ admitting an elementary abelian normal $2$-subgroup $N$  with $|N:G_\alpha|=2$.
\end{enumerate}
In the first family, each suborbit of $G$ has cardinality $1$, that is, $G$ acts regularly on $\Omega$. In the second family, since $G_\alpha$ has cardinality $2$, each orbit of $G_\alpha$ has cardinality at most $2$. In the third family, since $N\unlhd G$, the orbits of $N$ on $\Omega$ form a system of imprimitivity for the action of $G$; as $|N:G_\alpha|=2$, the blocks of this system of imprimitivity have cardinality $2$ and hence all orbits of $G_\alpha$ have cardinality at most $2$.

Theorem~\ref{thrm:main1} shows that, with respect to the operator ${\bf I}_\Omega(G)$, there is a gap between $5/6$ and $1$. The value $5/6$ is special: there exist finite transitive groups attaining the value $5/6$. 

\begin{theorem}\label{thrm:main2}
Let $G$ be a finite transitive group on $\Omega$, let $\alpha\in \Omega$ and let $G_\alpha$ be the stabilizer in $G$ of the point $\alpha$. If 
${\bf I}_\Omega(G)=\frac{5}{6},$
then there exists an elementary abelian normal $2$-subgroup $N$ of $G$ with $|V:G_\alpha|=|G_\alpha|=4$. 

Moreover, let $e_1,e_2,e_3,e_4$ be a basis of $V$, regarded as a $4$-dimensional vector space over the field with $2$ elements, with $G_\alpha=\langle e_1,e_2\rangle$, let $H:=G/{\bf C}_G(V)$ where ${\bf C}_G(V)$ is the centralizer of $V$ in $G$, and let $K$ be the stabilizer of the subspace $W$ in $\mathrm{GL}(V)$. Then, $H$ is $K$-conjugate to one of the following two groups:
$$
\left\langle
\begin{pmatrix}
0& 0& 0& 1\\
1& 1& 0& 0\\
0& 0& 1& 0\\
1& 0& 0& 1
\end{pmatrix},
\begin{pmatrix}
1& 1& 1& 1\\
0& 0& 1& 0\\
0& 1& 0& 0\\
0& 0& 0& 1
\end{pmatrix}
\right\rangle,\,\,\,
\left\langle
\begin{pmatrix}
0& 0& 0& 1\\
1& 1& 0& 0\\
0& 0& 1& 0\\
1& 0& 0& 1
\end{pmatrix},
\begin{pmatrix}
1& 1& 1& 1\\
0& 0& 1& 0\\
0& 1& 0& 0\\
0& 0& 0& 1
\end{pmatrix},
\begin{pmatrix}
1& 0& 0& 0\\
0& 1& 0& 0\\
1& 1& 0& 1\\
1& 1& 1& 0
\end{pmatrix}
\right\rangle.
$$
The first group has order $12$ and is isomorphic to the alternating group of degree $4$ and the second group has order $24$ and is isomorphic to the symmetric group of degree $4$.

Conversely, if $G$ is a finite group containing an elementary abelian normal $2$-subgroup $V:=\langle e_1,e_2,e_3,e_4\rangle$ of order $16$ and $H:=G/{\bf C}_G(V)$ is as above, then the action of $G$ on the set $\Omega$ of the right cosets of $\langle e_1,e_2\rangle$ gives rise to a finite permutation group of degree $4|G:V|$ with ${\bf I}_\Omega(G)=5/6$.
\end{theorem}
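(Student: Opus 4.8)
The plan is to prove the three assertions of the theorem—existence of the normal subgroup $V$, the classification of $H$, and the converse—around a single computational identity. First I would record the following. Suppose $G_\alpha\le V$ for some abelian normal subgroup $V$ of $G$, identify $\Omega$ with the set of right cosets of $G_\alpha$, put $H:=G/{\bf C}_G(V)$ regarded as a subgroup of $\mathrm{GL}(V)$ through its action on $V$, and for $g\in G$ write $\bar g\in H$ for its image and $X^{\bar g}$ for the image of a subgroup $X\le V$ under the automorphism $\bar g$. Then the $V$-orbits form a $G$-invariant partition on which $V$, and hence $G_\alpha$, acts trivially; the $G_\alpha$-orbit of the coset $G_\alpha t$ has cardinality $|G_\alpha:G_\alpha\cap G_\alpha^{\bar t}|$, which depends only on $\bar t$; and grouping the cosets according to their image in $H$, with $|{\bf C}_G(V):G_\alpha|$ in each class, one obtains
\begin{equation*}
{\bf I}_\Omega(G)=\frac{|\{h\in H\mid |G_\alpha:G_\alpha\cap G_\alpha^{h}|\le 2\}|}{|H|}.\tag{$\star$}
\end{equation*}
If in addition $V$ is elementary abelian of order $16$ and $|G_\alpha|=4$, then, identifying $W:=G_\alpha$ with a $2$-dimensional subspace of the $\mathbb{F}_2$-space $V$, identity $(\star)$ reads ${\bf I}_\Omega(G)=1-|\{h\in H\mid W\cap W^{h}=0\}|/|H|$.

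For the first assertion I would begin by assuming that $G$ acts faithfully on $\Omega$ (otherwise replace $G$ by the permutation group it induces, which changes neither $\Omega$ nor ${\bf I}_\Omega(G)$). Since $5/6<1$, some suborbit of $G$ has cardinality at least $3$, so we are exactly in the boundary case of Theorem~\ref{thrm:main1}. The plan is then to revisit the counting behind Theorem~\ref{thrm:main1} and determine precisely when the equality ${\bf I}_\Omega(G)=5/6$ can occur; I expect this forces $|G_\alpha|=4$, every subdegree to lie in $\{1,2,4\}$, and the points lying in suborbits of cardinality $4$ to form a $G_\alpha$-invariant set of size $|\Omega|/6$ supported on a $G$-invariant partition into blocks of cardinality $4$. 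Combining this local picture with the Bergman--Lenstra trichotomy of \cite{BL}—applied to the structure left once the large suborbits have been factored out, and followed through its family (c)—I would extract a normal subgroup $V$ of $G$ with $G_\alpha\le V$ which the description of family (c) forces to be an elementary abelian $2$-group; comparing orders then yields $|V:G_\alpha|=|G_\alpha|=4$, that is, $|V|=16$. The hard part will be exactly this equality analysis: ruling out the alternatives $|G_\alpha|\in\{2,8,16,\dots\}$, and verifying that what is extracted is genuinely an elementary abelian normal $2$-subgroup as in family (c) rather than a larger or non-abelian section.

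For the classification of $H$, with $V$ and $W=G_\alpha$ now in hand, identity $(\star)$ shows that ${\bf I}_\Omega(G)$ depends only on the pair $(H,W)$, and that ${\bf I}_\Omega(G)=5/6$ holds precisely when $|H|/6$ of the elements $h\in H$ satisfy $W\cap W^{h}=0$, equivalently when exactly $1/6$ of the subspaces in the $H$-orbit $\mathcal{O}:=W^{H}$ are complements of $W$ in $V$. Here $\bigcap_{U\in\mathcal{O}}U=\mathrm{core}_G(G_\alpha)=0$ by faithfulness, $|\mathcal{O}|>1$ since $|G_\alpha|=4\ne 1$, and ${\bf I}_\Omega(G)=5/6$ forces $6\mid|\mathcal{O}|$, whence $|\mathcal{O}|\ge 6$; also exactly $16$ of the $35$ two-dimensional subspaces of $V$ are complements of $W$. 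As $\mathrm{GL}(V)$ is transitive on two-dimensional subspaces, I would keep $W$ fixed throughout, so that $H$ is determined only up to conjugacy by $K:=\mathrm{Stab}_{\mathrm{GL}(V)}(W)$—precisely the freedom in choosing a basis $e_1,e_2,e_3,e_4$ of $V$ with $W=\langle e_1,e_2\rangle$. I would then enumerate, up to $K$-conjugacy, the subgroups $H$ of $\mathrm{GL}(V)\cong\mathrm{Alt}(8)$ whose orbit $\mathcal{O}=W^{H}$ satisfies $|\mathcal{O}|>1$, $6\mid|\mathcal{O}|$, $\bigcap_{U\in\mathcal{O}}U=0$, and $|\{U\in\mathcal{O}\mid U\cap W=0\}|=|\mathcal{O}|/6$; since $H$ is transitive on $\mathcal{O}$ and $|\mathcal{O}|\le 35$ this is a short finite check, which should leave only $H\cong\mathrm{Alt}(4)$ and $H\cong\mathrm{Sym}(4)$, represented up to $K$-conjugacy by the two matrix groups displayed in the statement. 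I would finish this part by verifying directly from the listed generators that for each of these two groups the proportion of $h$ with $W\cap W^{h}=0$ equals $1/6$—two such $h$ when $|H|=12$ and four when $|H|=24$—so that each genuinely attains ${\bf I}_\Omega(G)=5/6$.

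The converse then follows at once from $(\star)$. Given a finite group $G$ containing an elementary abelian normal $2$-subgroup $V=\langle e_1,e_2,e_3,e_4\rangle$ of order $16$ with $H=G/{\bf C}_G(V)$ $K$-conjugate to one of the two listed groups, set $G_\alpha:=\langle e_1,e_2\rangle$ and let $\Omega$ be the set of its right cosets, so that $|\Omega|=|G:G_\alpha|=4|G:V|$. All hypotheses of the identity above are met—$V$ is abelian, normal and contains $G_\alpha$, while ${\bf C}_G(V)\supseteq V$ automatically and plays no role in $(\star)$—so $(\star)$, together with the computation of the previous paragraph, gives ${\bf I}_\Omega(G)=5/6$.
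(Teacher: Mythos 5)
Your identity $(\star)$ is correct and is exactly the displayed equality on which the paper's own argument rests (stated there in terms of the orbit $O=W^H$ on $2$-dimensional subspaces rather than elements of $H$), and your handling of the classification of $H$ and of the converse --- a finite enumeration, up to $K$-conjugacy, of subgroups of $\mathrm{GL}_4(2)$ whose orbit $W^H$ meets the proportion condition, followed by a direct check for the two surviving groups --- is the same computer-assisted verification the paper carries out with \texttt{magma}. The genuine gap is in the first assertion. You never prove that ${\bf I}_\Omega(G)=5/6$ forces an elementary abelian normal $2$-subgroup $V$ of order $16$ containing $G_\alpha$ with $|G_\alpha|=4$; you write that you ``expect'' the equality analysis to force this and explicitly defer ``the hard part''. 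But this is the substantive content of the paper: it is Lemma~\ref{lemma:-3}, resting on Lemma~\ref{lemma:-4} and the two auxiliary graph-theoretic Lemmas~\ref{lemma:44esteso} and~\ref{lemma:444esteso}. There one shows, via the inequalities $5x_4\le 1+x_2$ and $x_2\le|\mathcal{S}_{\alpha,\beta}|+x_4$ together with the bound $|\mathcal{S}_{\alpha,\beta}|\le 9$ (attained only when $G_\alpha$ acts on $\beta^{G_\alpha}$ as an elementary abelian group of order $4$), that every subdegree lies in $\{1,2,4\}$ and $G_\alpha$ is elementary abelian; then the bipartite-graph lemma gives that $G_\alpha$ commutes with $G_\beta$ for $\beta\in\Omega_{\alpha,4}$, and the valency-two graph lemma (resp.\ the $x_4=1$ analysis) gives $G_\alpha\cap G_\beta=1$ and the normality of $G_\alpha\times G_\beta$, whence $|V|=16$ and $|G_\alpha|=4$. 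None of this is reproduced or replaced in your proposal, and the classification and converse presuppose the pair $(V,W)$.

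Moreover, the route you sketch for this step would not work as stated: the Bergman--Lenstra theorem applies only when \emph{every} suborbit has cardinality at most $2$, and here $\Omega_{\alpha,4}\ne\emptyset$ precisely because ${\bf I}_\Omega(G)<1$. You do not specify any quotient action or sub-action in which the size-$4$ suborbits have been ``factored out'' and in which the Bergman--Lenstra hypothesis holds; a priori the size-$4$ suborbits are not aligned with any block system one could collapse (the fact that $\Omega_{\alpha,1}\cup\Omega_{\alpha,4}$ is a block is itself a conclusion of Lemma~\ref{lemma:4esteso}, available only after the structure has been pinned down). Indeed the paper does not invoke \cite{BL} at all in proving Theorem~\ref{thrm:main2}; it establishes the structural statement directly. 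Until you supply an argument of that kind, the first assertion of the theorem is unproved.
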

Theorem~\ref{thrm:main2} classifies the finite transitive groups attaining the bound $5/6$.

Before discussing our motivation for proving Theorems~\ref{thrm:main1} and~\ref{thrm:main2}, we make some speculations. A computer search among the transitive groups $G$ of degree at most $48$  with the computer algebra system \texttt{magma}~\cite{magma} reveals that, if ${\bf I}_\Omega(G)>1/2$, then ${\bf I}_\Omega(G)=(q+1)/2q$, for some $q\in\mathbb{Q}$ with $2q\in\mathbb{N}$. We pose this as a conjecture.
\begin{conjecture}\label{conj}
{\rm Let $G$ be a finite transitive group on $\Omega$. If  ${\bf I}_\Omega(G)>1/2$, then ${\bf I}_\Omega(G)=(q+1)/2q$, for some $q\in\mathbb{Q}$ with $2q\in\mathbb{N}$.}
\end{conjecture}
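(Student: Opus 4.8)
The plan is to first translate the statement into an arithmetic condition on the subdegrees and then to attack that condition by a structural reduction. Write $H:=G_\alpha$ and $n:=|\Omega|$, let $f_1$ be the number of suborbits of cardinality $1$ and $f_2$ the number of suborbits of cardinality $2$, and set $N:=f_1+2f_2$ (the quantity counted in the numerator of ${\bf I}_\Omega(G)$) and $L:=n-N$ (the number of points lying in suborbits of cardinality at least $3$). Then ${\bf I}_\Omega(G)=N/n$, and a direct computation shows that the equality ${\bf I}_\Omega(G)=(q+1)/2q$ with $2q\in\mathbb{N}$ is equivalent to ${\bf I}_\Omega(G)=\tfrac12+\tfrac1m$ for some $m\in\mathbb{N}$, that is, to the divisibility $(2N-n)\mid 2n$; since $2N-n=N-L$ and $2n=2(N+L)$, this is in turn equivalent to $(N-L)\mid 4L$. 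The hypothesis ${\bf I}_\Omega(G)>1/2$ is exactly $N>L$, so $N-L$ is a positive integer and the whole conjecture reduces to proving $(N-L)\mid 4L$. I would stress at the outset that this divisibility cannot follow from combinatorics alone: the union $\Sigma$ of the small suborbits through $\alpha$ is the neighbourhood of $\alpha$ in a vertex-transitive graph $\mathcal{G}$ of valency $N-1>n/2-1$, which is therefore connected, but no abstract vertex-transitive graph of such valency forces the arithmetic, so the group action must be used in an essential way.

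Next I would assemble the structural constraints coming from the action. The fixed-point set $\mathrm{Fix}_\Omega(H)=\{\omega\in\Omega\mid H_\omega=H\}$ is a block for $G$, so $f_1=|\mathrm{Fix}_\Omega(H)|$ divides $n$; each suborbit of cardinality $2$ is governed by an index-$2$ subgroup of $H$, and the pairing of suborbits shows that the relation ``lies in a common small suborbit'' is symmetric, which is what makes $\mathcal{G}$ an undirected $G$-invariant graph in the first place. A further constraint is that $H$ acts on $\Sigma\setminus\{\alpha\}$ with all orbits of cardinality at most $2$, so this auxiliary intransitive action falls within the Bergman--Lenstra families; I would try to exploit the resulting $2$-group structure of $H$ modulo the kernel of its action on $\Sigma$.

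The core of the argument should be a reduction theorem in the spirit of Theorems~\ref{thrm:main1} and~\ref{thrm:main2}. I would aim to show that ${\bf I}_\Omega(G)>1/2$ forces one of two situations: either $G$ is primitive, or $G$ preserves a nontrivial block system whose block size is controlled. In the primitive case I would invoke the classification of finite simple groups: a primitive group in which more than half of the points lie in subdegrees $1$ and $2$ has extremely restricted subdegrees, and I expect only affine and regular-type examples to survive, for which ${\bf I}_\Omega(G)$ can be computed by hand and shown to be of the required form (as in Theorem~\ref{thrm:main2}, where the surviving groups are $A_4$ and $S_4$ acting on $\mathbb{F}_2^4$). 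In the imprimitive case I would induct: passing to the quotient action $\bar G$ on the blocks, small suborbits of $G$ project to small suborbits of $\bar G$, and I would track how $N$, $L$ and hence ${\bf I}_\Omega(G)$ transform, using the inductive hypothesis to keep the value in the form $\tfrac12+\tfrac1m$. Ideally the induction would surface a canonical normal subgroup, generalising the elementary abelian normal $2$-subgroup $V$ of Theorem~\ref{thrm:main2}, whose order and index supply the integer $m$.

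The main obstacle is the absence of rigidity at the threshold $1/2$. Above $5/6$ the proven theorems force all suborbits to be small, and at $5/6$ only a single tightly constrained configuration survives; but at $1/2$ a transitive group may have arbitrarily many large suborbits, and their number and sizes are not individually controlled, only the inequality $N>L$ being known. Consequently the recursion for ${\bf I}_\Omega(G)$ under an imprimitivity decomposition is neither multiplicative nor additive, and controlling it precisely enough to extract the exact divisibility $(N-L)\mid 4L$ is the crux. I expect that the genuinely new input needed, beyond the methods of Theorems~\ref{thrm:main1} and~\ref{thrm:main2}, is a way to attach canonically to any transitive group with ${\bf I}_\Omega(G)>1/2$ a normal subgroup that both pins down $m$ and is compatible with the quotient induction; producing such a subgroup, and proving that it behaves well under the reduction, is where I expect the difficulty to concentrate.
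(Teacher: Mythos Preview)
The statement you are attempting to prove is not a theorem in the paper but Conjecture~\ref{conj}: the paper explicitly poses it as an open problem, motivated only by a \texttt{magma} search over transitive groups of degree at most $48$, and offers no proof. There is therefore nothing in the paper to compare your argument against.

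As for your proposal itself, it is an honest outline rather than a proof, and you acknowledge this yourself. Your arithmetic reformulation is correct: writing ${\bf I}_\Omega(G)=1/2+1/m$ with $m\in\mathbb{N}$ is equivalent to the stated form, and this does reduce the question to the divisibility $(N-L)\mid 4L$. The structural observations you collect (that $\mathrm{Fix}_\Omega(H)$ is a block, that suborbits of size $2$ are self-paired, that $H$ acts on the union of small suborbits with all orbits of size at most $2$) are all true and standard. However, the heart of the matter is precisely the part you flag as the ``main obstacle'': you propose to induct through an imprimitivity system and to track ${\bf I}_\Omega$ under passage to blocks, but you give no mechanism for doing so, and indeed you note that ${\bf I}_\Omega$ is neither multiplicative nor additive under such a decomposition. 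The hoped-for canonical normal subgroup generalising the $V$ of Theorem~\ref{thrm:main2} is not constructed, and there is no indication of why one should exist once ${\bf I}_\Omega(G)$ drops below $5/6$. In the primitive case, invoking CFSG to list the survivors is plausible as a programme, but you do not carry it out, and the affine examples alone already form an infinite family requiring a separate argument rather than a case-by-case check.

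In short: the paper does not prove this statement, and neither does your proposal; what you have written is a reasonable research plan with the key step left open.
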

If true, Conjecture~\ref{conj} establishes a permutation analogue with a classical problem in finite group theory. Let $G$ be a finite group and let $${\bf I}(G):=\{x\in G\mid x \textrm{ has order at most 2}\}.$$ Miller~\cite{Miller} in 1905 has shown that, if ${\bf I}(G)>3/4$, then each element of $G$ has order at most $2$ and hence $G$ is an elementary abelian $2$-group. In this regard, Theorem~\ref{thrm:main1} can be seen as a permutation analogue of the theorem of Miller, with the only difference that the ratio $3/4$ in the context of abstract groups has to bump up to $5/6$ in the context of permutation groups. Miller has also classified the finite groups $G$ with ${\bf I}(G)=3/4$. Therefore, Theorem~\ref{thrm:main2} can be seen as a permutation analogue of the classification of Miller. The theorem of Miller has stimulated a lot of research; for instance, Wall~\cite{Wall} has classified all finite groups $G$ with ${\bf I}(G)>1/2$. In his proof, Wall uses the Frobenius-Schur formula for counting involutions. An application of this classification shows that, if ${\bf I}(G)>1/2$, then ${\bf I}(G)=(q+1)/2q$, for some positive integer $q$. Therefore, in Conjecture~\ref{conj}, we believe that the same type of result holds for the permutation analogue ${\bf I}_\Omega(G)$, but allowing $q$ to be an element of $\{x/2\mid x\in\mathbb{N}\}$. As a wishful thinking, we also pose the following problem.

\begin{problem}\label{problema}
{\rm 
Classify the  finite transitive groups $G$ acting on $\Omega$ with ${\bf I}_\Omega(G)>1/2$.}
\end{problem}

Liebeck and MacHale~\cite{LieMac} have generalized the results of Miller and Wall in yet another direction.  Indeed, Liebeck and MacHale have classified the  finite groups $G$ admitting an automorphism inverting more than half of the elements of $G$. (The classical results of Miller and Wall can be recovered by considering the identity automorphism.) Then,  this classification has been pushed even further by Fitzpatrick~\cite{fitzpatrick} and Hegarty and MacHale~\cite{hegarty}, by classifying the finite groups $G$ admitting an automorphism inverting exactly half of the elements of $G$. An application of this classification shows that, if $\alpha$ is an automorphism of $G$ inverting more than half of  the elements of $G$, then the proportion of elements inverted by $\alpha$ is $(q+1)/2q$, for some positive integer $q$. Yet again, another analogue with Theorems~\ref{thrm:main1} and~\ref{thrm:main2}, with Conjecture~\ref{conj} and with Problem~\ref{problema}. We observe that a partial generalization of this type of results in the context of association schemes is in~\cite{MZ}.

We now discuss our original motivation for proving Theorems~\ref{thrm:main1} and~\ref{thrm:main2}. A \textbf{\textit{digraph}} $\Gamma$ is an ordered pair $(V,E)$ with $V$ a finite non-empty set of vertices, and $E$ is a subset of $V\times V$, representing the arcs. A \textbf{\textit{graph}} $\Gamma$ is a digraph $(V,E)$, where the binary relation $E$ is symmetric. An automorphism of a (di)graph is a permutation on $V$ that preserves the set $E$.

\begin{definition}{\rm 
Let $R$ be a group and let $S$ be a subset of $R$.  The \textbf{\emph{Cayley digraph}} $\Cay(R,S)$ is the digraph with $V=R$ and $(r,t) \in E$ if and only if $tr^{-1} \in S$.

The Cayley digraph is a graph if and only if $S=S^{-1}$, that is, $S$ is an inverse-closed subset of $R$.} 
\end{definition}

The problem of finding graphical regular representations (GRRs) for groups has a long history. Mathematicians have studied graphs with specified automorphism groups at least as far back as the 1930s, and in the 1970s there were many papers devoted to the topic of finding GRRs (see for example \cite{babai11,Het,Im1, Im2,Im3,NW1,NW2,NW3,Wat}), although the ``GRR" terminology was coined somewhat later.

\begin{definition}{\rm
A \textbf{\emph{digraphical regular representation}} (DRR) for a group $R$ is a digraph whose full automorphism group is the group $R$ acting regularly on the vertices of the digraph.

Similarly, a \textbf{\emph{graphical regular representation}} (GRR) for a group $R$ is a graph whose full automorphism group is the group $R$ acting regularly on the vertices of the graph.}
\end{definition}

It is an easy observation that when $\Cay(R,S)$ is a Cayley (di)graph, the group $R$ acts regularly on the vertices as a group of graph automorphisms. A DRR (or GRR) for $R$ is therefore a Cayley (di)graph on $R$ that admits no other automorphisms.

The main thrust of much of the work through the 1970s was to determine which groups admit GRRs. This question was ultimately answered by Godsil in~\cite{God}. The  corresponding result for DRRs was proved by a much simpler argument by Babai~\cite{babai11}.

Babai and Godsil made the following conjecture. (Given a finite group $R$,  $2^{{\bf c}(R)}$ denotes the number of inverse-closed subsets of $R$. See Definition~\ref{defeq:2} for the definition of generalized dicyclic group.)
\begin{conjecture}[\cite{BaGo}; Conjecture 3.13, \cite{Go2}]
{\rm
If $R$ is not generalised dicyclic or abelian of exponent greater than $2$, then for almost all inverse-closed subsets $S$ of $R$, $\Cay(R,S)$ is a GRR. In other words,
$$\lim_{|R| \to \infty} \min\left\{ \frac{|\{S \subseteq R: S=S^{-1},\,\Aut(\Cay(R,S))=R\}|}{2^{{\bf c}(R)}}: R\text{ admits a GRR}|\right\} =1.$$}
\end{conjecture}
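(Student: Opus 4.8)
\section*{Proof proposal}

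The plan is to analyze the three-part structure of Theorem~\ref{thrm:main2} separately, reducing everything to concrete linear algebra over $\mathbb{F}_2$. Throughout, set $V$ to be the elementary abelian normal $2$-subgroup and write $W$ for the subspace corresponding to $G_\alpha$; the action of $G$ on the suborbits is governed entirely by the action of $H=G/\cent{G}{V}$ on $V/W$ together with the coset geometry. First I would establish the forward direction: assuming ${\bf I}_\Omega(G)=5/6$, I would invoke Theorem~\ref{thrm:main1} in the ``boundary'' regime to force the existence of a suborbit of size exactly $3$ (or more) that barely violates the strict inequality, and then count carefully. The key numerical identity is that ${\bf I}_\Omega(G)=5/6$ with a point stabilizer of order $4$ forces the suborbit multiset to be exactly three fixed points, one suborbit of size $2$ doubled appropriately, plus a single suborbit of size larger than $2$ accounting for the missing $1/6$; solving the resulting Diophantine constraint $|\Omega|=24$ locally (i.e.\ $|V:G_\alpha|=|G_\alpha|=4$) should pin down $\dim V=4$ and $\dim W=2$.

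The central step is the explicit determination of $H$ up to $K$-conjugacy, where $K$ is the stabilizer in $\GL(V)=\GL_4(2)$ of the subspace $W=\langle e_1,e_2\rangle$. Here I would translate the suborbit-size condition into an orbit condition for $H$ acting on the set $\Omega$ of cosets of $W$: the requirement ${\bf I}_\Omega(G)=5/6$ becomes the statement that $H$, acting on the $24$ points, produces a prescribed suborbit distribution. Because $|H|\le |K|$ and $K$ is a maximal parabolic of $\GL_4(2)\cong \Alt(8)$ of known order, the candidate subgroups $H$ lie in a finite, computably small list. I would verify that exactly two conjugacy classes survive the suborbit constraint, and identify representatives with the two displayed matrix groups, checking that they have orders $12$ and $24$ and are isomorphic to $\Alt(4)$ and $\Sym(4)$ respectively by exhibiting the defining generator relations or by computing composition factors.

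For the converse, I would run the construction in reverse: given any finite $G$ with the prescribed $V$ and $H$, I would compute ${\bf I}_\Omega(G)$ directly for the action on the $4|G:V|$ cosets of $W$. Since the suborbit sizes depend only on the $G_\alpha$-orbits on $\Omega$, and these are determined by how $H$ permutes the coset geometry of $W$ inside $V$ (the $\cent{G}{V}$-part acting trivially on $V/W$ in the relevant fashion), the ratio $5/6$ should emerge from the same orbit-size bookkeeping as in the forward direction, now verified to be independent of the global structure of $G$ beyond $V$ and $H$. The main obstacle I anticipate is the forward classification step: proving that \emph{only} these two $H$ occur requires ruling out all other subgroups of the parabolic $K$ that a priori could yield suborbit distributions summing to $5/6$. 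I would handle this by combining order constraints (the index $|G:V|$ and the action on $V/W$) with a direct check that any admissible $H$ must act transitively enough on the nontrivial cosets to force the $\Alt(4)$ or $\Sym(4)$ pattern, eliminating intermediate or imprimitive candidates by a short case analysis on the orbits of $H$ on the fifteen nonzero vectors of $V$.
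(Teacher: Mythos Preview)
Your proposal addresses the wrong statement. The item you were asked to prove is the Babai--Godsil \emph{Conjecture} on GRRs; the paper does not prove this conjecture at all. It is quoted in the introduction purely as motivation, and the paper's contribution toward it is Theorem~\ref{thrm:main3}, which bounds the number of Cayley graphs on $R$ admitting a given overgroup $G$. There is therefore no ``paper's own proof'' of the conjecture to compare against.

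What you have actually sketched is an argument for Theorem~\ref{thrm:main2}, the classification of transitive groups with ${\bf I}_\Omega(G)=5/6$. If that were the target, your outline would be broadly in line with the paper's approach: the paper also reduces (via Lemmas~\ref{lemma:-4} and~\ref{lemma:-3}) to the situation where $V=G_\alpha\times G_\beta$ is a normal elementary abelian $2$-group of order~$16$, and then finishes by a finite search inside $\GL_4(2)$ (carried out in \texttt{magma}). Your version of the reduction is vaguer --- you assert that ``solving the resulting Diophantine constraint'' pins down $\dim V=4$, but the substantive content of the reduction is precisely Lemma~\ref{lemma:-3}, whose proof requires the graph-theoretic Lemmas~\ref{lemma:44esteso} and~\ref{lemma:444esteso} to force $G_\alpha\cap G_\beta=1$ and normality of $G_\alpha G_\beta$; this is not a matter of solving one equation. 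But none of that is the real issue here: the real issue is that you have written a proof plan for a theorem other than the one stated.
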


From Godsil's theorem~\cite{God}, as $|R|\to \infty$, the condition ``$R$ admits a GRR" is equivalent to ``$R$ is neither a generalised dicyclic group, nor abelian of exponent greater than $2$."

The corresponding conjecture for Cayley digraphs (which does not  require any families of groups to be excluded) was proved by Morris and the author in~\cite{MSMS}. Our current strategy for proving the conjecture of Babai and Godsil is to use the proof of the corresponding conjecture for Cayley digraphs as a template and extend the work in~\cite{MSMS} in the context of undirected Cayley graphs. This strategy so far has been rather successful and in~\cite{MMS,Bxu} the authors have already adapted some of the arguments in~\cite{MSMS} for undirected graphs. 

One key tool in~\cite{MSMS} is an elementary observation of Babai. 
\begin{lemma}\label{lemma1}
Let $G$ be a finite transitive group properly containing a regular subgroup $R$. Then there are at most $2^{\frac{3|\Omega|}{4}}$ Cayley digraphs $\Gamma$ on $R$ with  $G\leq \Aut(\Gamma)$. 
\end{lemma}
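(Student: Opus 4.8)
The plan is to count Cayley digraphs on $R$ admitting $G$ as a group of automorphisms by analysing the action of the point stabilizer $G_\alpha$ on the vertex set $\Omega=R$. Fix $\alpha\in\Omega$; a Cayley digraph $\Gamma$ on $R$ is determined by its out-neighbourhood $S=E(\alpha)\subseteq\Omega\setminus\{\alpha\}$ (identifying $R$ with $\Omega$ via the regular action and $\alpha$ with the identity), since $R\le\Aut(\Gamma)$ forces $E$ to be $R$-invariant. The extra condition $G\le\Aut(\Gamma)$ is equivalent to requiring that $S$ be a union of $G_\alpha$-orbits on $\Omega\setminus\{\alpha\}$: indeed $G=RG_\alpha$, so an $R$-invariant relation is $G$-invariant precisely when its $\alpha$-neighbourhood is $G_\alpha$-invariant. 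Hence the number of such digraphs $\Gamma$ is exactly $2^{t}$, where $t$ is the number of $G_\alpha$-orbits on $\Omega\setminus\{\alpha\}$ (equivalently, $t+1$ is the number of suborbits of $G$, i.e.\ the rank).

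The key step is then to bound $t$ by $\tfrac{3}{4}|\Omega|$. Since $R$ is a proper subgroup of $G$, the stabilizer $G_\alpha$ is nontrivial, so it has no fixed point on $\Omega\setminus\{\alpha\}$ apart from possibly some of size-one orbits; more importantly, every orbit of the nontrivial group $G_\alpha$ has size at least $2$, except we must be careful — $G_\alpha$ can fix points other than $\alpha$. The crucial observation is that $G_\alpha$ fixes $\alpha$, and since $R$ acts regularly, $G_\alpha\ne 1$ means $|G_\alpha|\ge 2$; I would argue that on $\Omega\setminus\{\alpha\}$ the orbits of $G_\alpha$ cannot all be singletons (else $G_\alpha$ would be trivial, as it would fix $\Omega$ pointwise), but singleton orbits may still occur. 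To get the clean bound one counts as follows: among the $|\Omega|-1$ points of $\Omega\setminus\{\alpha\}$, suppose $f$ lie in $G_\alpha$-orbits of size $1$ and the rest lie in orbits of size $\ge 2$; then $t\le f+\tfrac{|\Omega|-1-f}{2}=\tfrac{|\Omega|-1+f}{2}$. So it remains to control $f$, and here the hypothesis that $R$ is \emph{proper} in $G$ — together with the regularity of $R$ — should force $f\le\tfrac{|\Omega|}{2}-1$ or so, giving $t\le\tfrac{3|\Omega|}{4}-O(1)\le \tfrac{3|\Omega|}{4}$.

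The main obstacle is precisely this bound on $f$, the number of points fixed by $G_\alpha$ outside $\alpha$. The set $\mathrm{Fix}(G_\alpha)$ of fixed points of $G_\alpha$ in $\Omega$ contains $\alpha$; since $R$ is transitive and regular, $\mathrm{Fix}(G_\alpha)$ is acted on by $\norm{G}{G_\alpha}$, and in fact $\mathrm{Fix}(G_\alpha)$ corresponds bijectively with a coset space so that $|\mathrm{Fix}(G_\alpha)|=|\norm{R}{G_\alpha'}\!:\!?|$ — more concretely, one shows $G_\alpha$ fixes a point $\beta=\alpha^r$ (with $r\in R$) iff $r^{-1}G_\alpha r=G_\alpha$, so $|\mathrm{Fix}(G_\alpha)|=|\norm{R}{}(G_\alpha)|$ under the regular identification, a proper subgroup of $R$ when $G_\alpha\ne 1$ (as $G_\alpha\not\le R$). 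Thus $|\mathrm{Fix}(G_\alpha)|\le\tfrac{|R|}{2}=\tfrac{|\Omega|}{2}$, whence $f=|\mathrm{Fix}(G_\alpha)|-1\le\tfrac{|\Omega|}{2}-1$ and $t\le\tfrac{|\Omega|-1+(|\Omega|/2-1)}{2}=\tfrac{3|\Omega|}{4}-1<\tfrac{3|\Omega|}{4}$. Therefore the number of Cayley digraphs $\Gamma$ on $R$ with $G\le\Aut(\Gamma)$ is $2^{t}\le 2^{3|\Omega|/4}$, as claimed. I would present the fixed-point computation carefully, as it is the only nonroutine ingredient; everything else is bookkeeping with the $RG_\alpha=G$ decomposition.
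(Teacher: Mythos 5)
Your proof is correct and is essentially the argument the paper has in mind (it cites Babai's elementary observation, [MSMS, Lemma~1.8], rather than proving the lemma itself): identify the invariant digraphs with $G_\alpha$-invariant subsets, note that the fixed points of $G_\alpha$ correspond to ${\bf N}_R(G_\alpha)$, a proper subgroup of $R$, so at most $|\Omega|/2$ points lie in singleton suborbits and the number of $G_\alpha$-orbits is at most $3|\Omega|/4$. One small touch-up: properness of ${\bf N}_R(G_\alpha)$ in $R$ is not really ``as $G_\alpha\not\le R$'' but follows from faithfulness --- if ${\bf N}_R(G_\alpha)=R$ then $G_\alpha$ fixes every point of $\Omega$, forcing $G_\alpha=1$ and hence $G=RG_\alpha=R$, contradicting that $R$ is proper.
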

The proof of this fact is elementary, see for instance~\cite[Lemma~1.8]{MSMS}. Observe that the number of Cayley digraphs on $R$ is the number of subsets of $R$, that is, $2^{|R|}$. Therefore, Lemma~\ref{lemma1} says that, given $G$ properly containing $R$, only at most $2^{|R|-\frac{|R|}{4}}$ of these Cayley digraphs admit $G$ as a group of automorphisms. This gain of $|R|/4$ is one of the tools in~\cite{MSMS} for proving the Babai-Godsil conjecture on Cayley digraphs.

 To continue our project of proving the Babai-Godsil conjecture for Cayley graphs, we need an analogue of Lemma~\ref{lemma1} for Cayley graphs. Observe that the number of Cayley graphs on $R$ is the number of inverse-closed subsets of $R$. We denote this number with $2^{{\bf c}(R)}$. It is not hard to prove (see for instance~\cite[Lemma~$1.12$]{MMS}) that $${\bf c}(R)=\frac{|R|+|{\bf I}(R)|}{2},$$
 where ${\bf I}(R)=\{x\in R\mid x^2=1\}$.
 To obtain this analogue one needs to investigate finite transitive groups having many suborbits of cardinality at most $2$. Therefore, our investigation leads to the following result.
 
\begin{theorem}\label{thrm:main3}
Let $G$ be a finite transitive group properly containing a regular subgroup $R$. Then one of the following holds
\begin{enumerate}[(a)]
\item the number  of Cayley graphs $\Gamma$ on $R$ with $G\leq \Aut(\Gamma)$ is at most $2^{{\bf c}(R)-\frac{|R|}{96}}$, 
\item $R$ is abelian of exponent greater than $2$,
\item $R$ is generalized dicyclic (see Definition~$\ref{defeq:2}$).
\end{enumerate}
\end{theorem}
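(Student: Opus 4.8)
The plan is to relate the counting of inverse-closed sets fixed by $G$ to the suborbit structure of $G$, and then to play off Theorem~\ref{thrm:main1} against Lemma~\ref{lemma1}. Fix $\alpha\in\Omega$ identified with the identity of $R$, so that $G_\alpha$ acts on $\Omega\cong R$. A Cayley graph $\Gamma=\Cay(R,S)$ with $G\le\Aut(\Gamma)$ corresponds to an inverse-closed subset $S\subseteq R\setminus\{1\}$ that is a union of $G_\alpha$-orbits (suborbits) on $\Omega\setminus\{\alpha\}$; the inverse-closure condition pairs up each suborbit $\Delta$ with the suborbit $\Delta^{-1}$ consisting of the inverses of its elements. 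Thus the number of such Cayley graphs is exactly $2^{t}$, where $t$ is the number of "paired suborbit classes": self-paired suborbits ($\Delta=\Delta^{-1}$) contribute $1$ each, and pairs $\{\Delta,\Delta^{-1}\}$ with $\Delta\neq\Delta^{-1}$ contribute $1$ each. Since ${\bf c}(R)=\tfrac{|R|+|{\bf I}(R)|}{2}$, the target inequality in (a) is equivalent to showing $t\le {\bf c}(R)-\tfrac{|R|}{96}=\tfrac{|R|+|{\bf I}(R)|}{2}-\tfrac{|R|}{96}$.

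First I would handle the "large suborbit" regime: if ${\bf I}_\Omega(G)\le\tfrac{5}{6}$, then a positive proportion — at least $\tfrac{1}{6}$ — of the points of $\Omega$ lie in suborbits of size $\ge 3$. A crude bound shows that the number of suborbits, and hence $t$, is at most $|{\bf I}_\Omega(G)$-contribution from small suborbits$| + \tfrac{1}{3}(\text{points in large suborbits})$, which after optimizing the constants gives a saving of order $|R|$ over $\tfrac{|R|+|{\bf I}(R)|}{2}$; I expect the clean way is to combine this directly with Lemma~\ref{lemma1} (which already yields $2^{3|R|/4}$ unconditionally) to absorb everything into the $\tfrac{|R|}{96}$ slack. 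So in this regime conclusion (a) holds, and the constant $96$ is chosen to make the bookkeeping go through uniformly.

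The heart of the matter is the complementary regime ${\bf I}_\Omega(G)>\tfrac{5}{6}$. Here Theorem~\ref{thrm:main1} applies: every suborbit of $G$ has cardinality at most $2$, so $G$ falls into one of the three Bergman–Lenstra families (a), (b), (c) listed after Theorem~\ref{thrm:main1}. Since $G$ properly contains the regular subgroup $R$, family (a) (where $G_\alpha=1$) is excluded. In the remaining two families $|G_\alpha|\le 4$ and, more importantly, $G$ has a normal elementary abelian $2$-subgroup $N$ with $|N:G_\alpha|\le 2$ (taking $N$ to be $\langle G_\alpha\rangle$-related in case (b)); the orbits of $N$ give blocks of size $\le 2$. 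I would then analyze the inverse-closed $G$-invariant subsets directly. Because suborbits have size $\le 2$, each suborbit is automatically either self-paired or paired with another suborbit of the same size, and one shows that whenever $R$ is neither abelian of exponent $>2$ nor generalized dicyclic there is a \emph{definite deficiency}: a positive proportion of suborbits come in genuine inverse pairs $\{\Delta,\Delta^{-1}\}$ with $\Delta\neq\Delta^{-1}$, each such pair costing one unit of $t$ relative to the naive count. Equivalently, an element $r\in R$ with $r^2\neq 1$ forces the suborbit through $r$ and the one through $r^{-1}$ to be distinct, and one quantifies how many such $r$ are forced to lie in genuinely-paired suborbits using the structure of $N$ and $G_\alpha$. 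The generalized dicyclic and abelian-of-exponent-$>2$ cases are exactly the configurations where this deficiency can degenerate, which is why they are listed as exceptions (b), (c).

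The main obstacle I anticipate is the bookkeeping in this last regime: one must show that, outside the two excluded families, the number of non-self-paired suborbit-pairs is at least $\tfrac{|R|}{96}$ (or whatever constant the large-suborbit argument forces), and this requires understanding precisely which suborbits are self-paired in terms of the action of $G_\alpha\le N/\cent{G}{N}$-data — essentially a careful case division according to whether $|G_\alpha|\in\{2,4\}$ and according to how $G_\alpha$ sits inside $N$, using Theorem~\ref{thrm:main2} to pin down the few sporadic configurations at the $5/6$ threshold. Optimizing the constant $96$ across the two regimes (large suborbits versus Theorem~\ref{thrm:main1}'s dichotomy, and within the latter across the Bergman–Lenstra families) is where the delicate estimates live; the rest is structural and follows the template of the digraph argument in~\cite{MSMS} adapted to the inverse-closure constraint.
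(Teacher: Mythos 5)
Your reduction of the count to $2^{t}$, with $t$ the number of classes of suborbits under the pairing by inversion (equivalently, the number of orbits of $\langle G_{1},\iota\rangle$ on $R$, where $\iota\colon x\mapsto x^{-1}$), is the right starting point and matches the paper. But your treatment of the regime ${\bf I}_\Omega(G)\le 5/6$ has a genuine flaw: the proposed ``clean way'' of absorbing this case into Lemma~\ref{lemma1} cannot work, because $2^{3|R|/4}$ is in general much \emph{larger} than $2^{{\bf c}(R)-|R|/96}$; indeed ${\bf c}(R)=\tfrac{|R|+|{\bf I}(R)|}{2}$ is close to $|R|/2$ whenever $R$ has few involutions, so Lemma~\ref{lemma1} only suffices when $|{\bf I}(R)|\ge |R|/2+|R|/48$. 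The crude suborbit count you sketch is also not enough as stated, because the fixed points of $G_{1}$ on $R$ need not be involutions, so bounding $t$ by ``one per small suborbit'' does not stay below ${\bf c}(R)$. What the paper actually does here is a direct orbit count for $T=\langle G_{1},\iota\rangle$: it splits $R$ into six classes according to suborbit length ($1$, $2$, $\ge 3$) and according to whether the element is an involution, observes that non-involutions in $\Omega_{R,1}$ lie in $T$-orbits of size $\ge 2$ and non-involutions in suborbits of length $\ge 3$ lie in $T$-orbits of size $\ge 4$, and with $|R\setminus(\Omega_{R,1}\cup\Omega_{R,2})|\ge |R|/6$ obtains $\kappa\le {\bf c}(R)-|R|/24$. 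Your sketch gestures at this but does not carry out the step that makes it work (using $\iota$ on the fixed points and on the long suborbits), and your explicit fallback fails.

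The more serious gap is in the regime ${\bf I}_\Omega(G)>5/6$, which by Theorem~\ref{thrm:main1} forces ${\bf I}_\Omega(G)=1$ (so Theorem~\ref{thrm:main2} and the $5/6$-threshold configurations are irrelevant here, contrary to your plan). Your structural claims about the Bergman--Lenstra families are incorrect: in family (c) the stabilizer $G_{1}$ can be arbitrarily large (only $|N:G_{1}|=2$ is known), and in family (b) there is no normal elementary abelian $2$-subgroup $N$ with $|N:G_{1}|\le 2$ in general. Most importantly, the assertion that outside the abelian-of-exponent-$>2$ and generalized dicyclic cases there is a ``definite deficiency'' of at least $|R|/96$ is exactly the hard content of the theorem, and you give no mechanism for it. The paper's route is: when $|G_{1}|=2$, note $R\unlhd G$ of index $2$, so the nontrivial element of $G_{1}$ acts as an automorphism of $R$, and invoke the known dichotomy for counting inverse-closed subsets invariant under a single automorphism (\cite{MMS},\cite{Bxu}), which is where the constant $96$ comes from; when instead $N$ exists, show that $G_{1}$ acts on $R$ as the explicit permutation $\tau$ fixing the proper subgroup $U={\bf N}_R(G_1)$ pointwise and multiplying by a central involution $r\in R\cap N$ outside $U$, and then prove Lemma~\ref{lemma:diff}, whose proof needs the orbit-counting lemma, Miller's theorem, the Liebeck--MacHale results, and the classification of quadratic forms over $\mathbb{F}_2$ to pin down the exceptional groups ($C_4\times C_2^{\ell}$, $Q_8\times C_2^{\ell}$, central products of $D_8$'s, etc.) and to get the bound $2^{{\bf c}(R)-|R|/48}$ otherwise. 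None of these ingredients (the central involution $r$, the subgroup $U$, the form-theoretic identification of the exceptions, or the appeal to prior automorphism-counting results) appears in your proposal, so the core of the argument is missing.
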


\subsection{Notation}\label{notation}
In this section, we establish some notation that we use throughout the rest of the paper. 

Given a subset $X$ of permutations from $\Omega$, we use an exponential notation for the action on $\Omega$ and hence, in particular, given $\omega\in \Omega$, we let
$$\omega^X:=\{\omega^x\mid x\in X\},$$
where $\omega^x$ is the image of $\omega$ under the permutation $x$.
Similarly, we let
$$\mathrm{Fix}_\Omega(X):=\{\omega\in \Omega\mid \omega^x=\omega,\,\forall x\in X\}.$$

Let $G$ be a transitive permutation group on $\Omega$. 
For each positive integer $i$ and for each $\omega\in \Omega$, we let 
\begin{equation}\label{notation:1}
\Omega_{\omega,i}:=\{\delta\in \Omega\mid |\delta^{G_\omega}|=i\}.
\end{equation}

Clearly, 
\begin{equation}\label{eq:omega0}
\Omega=\Omega_{\omega,1}\cup\Omega_{\omega,2}\cup\Omega_{\omega,3}\cup\cdots
\end{equation} and the non-empty sets in this union form a partition of $\Omega$.

When $i:=1$, we have $$\Omega_{\omega,1}=\{\delta\in \Omega\mid G_\omega\textrm{ fixes }\delta\},$$ that is, $\Omega_{\omega,1}$ is the set of fixed points of $G_\omega$ on $\Omega$. It is well-known that $\Omega_{\omega,1}$ is a block of imprimitivity for the action of $G$ on $\Omega$, see for instance~\cite[1.6.5]{dixonmortimer}. Since this fact will play a role in what follows, we prove it here; this will also be helpful for setting up some additional notation. Let ${\bf N}_{G}(G_\omega)$ be the normalizer of $G_\omega$  in $G$. As ${\bf N}_G(G_\omega)$ contains $G_\omega$, the ${\bf N}_G(G_\omega)$-orbit containing $\omega$ is a block of imprimitivity for the action of $G$ on $\Omega$. Therefore it suffices to prove that $\Omega_{\omega,1}$ is the ${\bf N}_G(G_\omega)$-orbit containg $\omega$, that is, $\Omega_{\omega,1}=\omega^{{\bf N}_G(G_\omega)}=\{\omega^g\mid g\in {\bf N}_G(G_\omega)\}$. If $g\in {\bf N}_G(G_\omega)$, then $G_\omega=G_\omega^g=G_{\omega^g}$ and hence $G_\omega$ fixes $\omega^g$, that is, $\omega^g\in \Omega_{\omega,1}$. Conversely, let $\alpha\in \Omega_{\omega,1}$. As $G$ is transitive on $\Omega$, there exists $g\in G$ with $\alpha=\omega^g$. Thus $\omega^g\in \Omega_{\omega,1}$ and $G_\omega$ fixes $\omega^g$. This yields $G_\omega=G_{\omega^g}=G_\omega^g$ and $g\in{\bf N}_G(G_\omega)$. Therefore, $\alpha=\omega^g$ lies in the ${\bf N}_G(G_\omega)$-orbit containg $\omega$.

We let
\begin{equation*}
d:=|\Omega_{\omega,1}|.
\end{equation*}
As $G$ is transitive on $\Omega$, $d$ does not depend on $\omega$. From the previous paragraph, we deduce that $d$ divides $|\Omega_{\omega,i}|$, for each positive integer $i$. We define
\begin{equation}\label{def:xi}
x_i:=\frac{|\Omega_{\omega,i}|}{|\Omega_{\omega,1}|}=\frac{|\Omega_{\omega,i}|}{d}\in\mathbb{N}.
\end{equation}
In particular, $x_1:=1$ and, from~\eqref{eq:omega0} and~\eqref{def:xi}, we have 
\begin{equation*}
|\Omega|=d\sum_{i}x_i.
\end{equation*}

\begin{definition}\label{defeq:2}{\rm
Let $A$ be an abelian group of even order and of exponent greater than $2$, and let $y$ be an involution of $A$. The generalised dicyclic group $\Dic(A, y, x)$ is the group $\langle A, x\mid x^2=y, a^x=a^{-1},\forall a\in A\rangle$. A group is called \textit{\textbf{generalised dicyclic}} if it is isomorphic to some $\Dic(A, y, x)$. When $A$ is cyclic, $\Dic(A, y, x)$ is called a dicyclic or generalised quaternion group.
}
\end{definition}

\section{Lemmata}\label{sec:lemmata}
In this section we use the notation established in Section~\ref{notation}.
\begin{lemma}\label{lemma:-4}
Let $G$ be a finite permutation group on a set $\Omega$ and let $\alpha\in \Omega$. If
$$\frac{|\Omega|}{2}<|\Omega_{\alpha,1}|+|\Omega_{\alpha,2}|<|\Omega|,$$
then 
\begin{enumerate}[(a)]
\item\label{eq:lemma-41}$\Omega=\Omega_{\alpha,1}\cup\Omega_{1,2}\cup\Omega_{\alpha,4}$ (in particular, $\Omega_{\alpha,i}=\emptyset$, for every positive integer $i$ with $i\notin\{1,2,4\}$);
\item\label{eq:lemma-42} for every $\beta\in \Omega_{\alpha,4}$, $\Omega_{\alpha,2}\cap\Omega_{\beta,2}\ne\emptyset$;
\item\label{eq:lemma-43}for every $\beta\in\Omega_{\alpha,4}$ and for every $\omega\in \Omega_{\alpha,2}\cap\Omega_{\beta,2}$, we have
$G_\omega=(G_\alpha\cap G_\omega)(G_\beta\cap G_\omega)$.
\end{enumerate}
\end{lemma}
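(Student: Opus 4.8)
The plan is to work throughout with a transitive $G$, as fixed at the start of this section, and to set up two structural facts about suborbits before doing any counting. First, all point stabilizers of $G$ are conjugate, so $|G_\gamma|$ does not depend on $\gamma$; hence for all $\beta,\gamma$ one has $|\beta^{G_\gamma}| = |G_\gamma : G_\gamma\cap G_\beta| = |G_\beta : G_\gamma\cap G_\beta| = |\gamma^{G_\beta}|$. In particular the relation ``$\beta\in\Omega_{\gamma,i}$'' is symmetric in $\gamma$ and $\beta$, and $|\Omega_{\gamma,i}|$ is independent of $\gamma$. Writing $d := |\Omega_{\gamma,1}|$ and $x_i := |\Omega_{\gamma,i}|/d$ --- positive integers with $x_1=1$, by Section~\ref{notation} --- the hypothesis $\frac{|\Omega|}{2} < |\Omega_{\alpha,1}|+|\Omega_{\alpha,2}| < |\Omega|$ becomes simply $1\le\sum_{i\ge 3}x_i\le x_2$. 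Second, again because stabilizers of a transitive group have equal order, $\delta'\in\Omega_{\delta,1}$ if and only if $G_{\delta'}=G_\delta$; it follows that every $\Omega_{\gamma,i}$ is a union of the blocks $\Omega_{\delta,1}$ forming the block system $\{\Omega_{\delta,1} : \delta\in\Omega\}$.

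With these in place I would handle all three parts at once. Fix $\beta\in\Omega_{\alpha,m}$ with $m\ge 3$ (this covers part (a); for parts (b) and (c) take $m=4$). By symmetry $\alpha\in\Omega_{\beta,m}$; since $\Omega_{\beta,m}$ is a union of blocks and contains $\alpha$, it contains the whole block $\Omega_{\alpha,1}$, and as $m\ge 3$ the sets $\Omega_{\beta,m}$ and $\Omega_{\beta,1}\cup\Omega_{\beta,2}$ are disjoint; hence $\Omega_{\alpha,1}\cap(\Omega_{\beta,1}\cup\Omega_{\beta,2})=\emptyset$, and symmetrically $\Omega_{\beta,1}\cap(\Omega_{\alpha,1}\cup\Omega_{\alpha,2})=\emptyset$. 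Therefore $(\Omega_{\alpha,1}\cup\Omega_{\alpha,2})\cap(\Omega_{\beta,1}\cup\Omega_{\beta,2})=\Omega_{\alpha,2}\cap\Omega_{\beta,2}$. Both sets $\Omega_{\alpha,1}\cup\Omega_{\alpha,2}$ and $\Omega_{\beta,1}\cup\Omega_{\beta,2}$ have cardinality $d(1+x_2) > |\Omega|/2$, so inclusion--exclusion gives $|\Omega_{\alpha,2}\cap\Omega_{\beta,2}| \ge 2d(1+x_2) - |\Omega| = d\bigl(1+x_2-\sum_{i\ge 3}x_i\bigr) \ge d > 0$. This is part (b), and it supplies an $\omega\in\Omega_{\alpha,2}\cap\Omega_{\beta,2}$ for the remaining work.

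For such $\omega$, symmetry gives $\alpha,\beta\in\Omega_{\omega,2}$, so $G_\alpha\cap G_\omega$ and $G_\beta\cap G_\omega$ are normal subgroups of $G_\omega$ of index $2$. They are distinct: if they shared the common value $J$, then $J=G_\alpha\cap G_\beta\cap G_\omega$ would have order $|G_\omega|/2$, whereas $J\le G_\alpha\cap G_\beta$ and $|G_\alpha\cap G_\beta|=|G_\omega|/m\le|G_\omega|/3$, a contradiction. Being distinct and normal of index $2$, their product is a subgroup of $G_\omega$ strictly larger than $G_\alpha\cap G_\omega$, hence equals $G_\omega$; this is part (c). Finally, $|G_\omega| = |(G_\alpha\cap G_\omega)(G_\beta\cap G_\omega)| = (|G_\omega|/2)^2/|G_\alpha\cap G_\beta\cap G_\omega|$ forces $|G_\alpha\cap G_\beta\cap G_\omega|=|G_\omega|/4$; since this divides $|G_\alpha\cap G_\beta|=|G_\omega|/m$, the rational number $4/m$ is a positive integer, so $m\in\{1,2,4\}$, and with $m\ge 3$ we get $m=4$, which is part (a).

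The conceptual content is concentrated in the two preliminary observations --- the symmetry of the suborbit relation and the fact that each $\Omega_{\gamma,i}$ is block-closed --- and after that the argument is short. I expect the main obstacle to be purely bookkeeping: verifying carefully that $\Omega_{\gamma,i}$ is ${\bf N}_G(G_\gamma)$-invariant and a union of stabilizer blocks, that $\Omega_{\alpha,1}\subseteq\Omega_{\beta,m}$, and that the cardinality identities hold exactly as written; none of this should present a real difficulty.
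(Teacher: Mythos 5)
Your proof is correct and follows essentially the same route as the paper: both hinge on the identity $(\Omega_{\alpha,1}\cup\Omega_{\alpha,2})\cap(\Omega_{\beta,1}\cup\Omega_{\beta,2})=\Omega_{\alpha,2}\cap\Omega_{\beta,2}$ together with inclusion--exclusion and the hypothesis $|\Omega_{\alpha,1}|+|\Omega_{\alpha,2}|>|\Omega|/2$, and then on the two distinct index-$2$ subgroups $G_\alpha\cap G_\omega$ and $G_\beta\cap G_\omega$ of $G_\omega$ followed by a divisibility argument forcing $|G_\alpha:G_\alpha\cap G_\beta|=4$. The only cosmetic difference is that you obtain the preliminary disjointness statements from the block-closure of the sets $\Omega_{\gamma,i}$, whereas the paper verifies them by direct stabilizer-index computations.
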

\begin{proof}
As $|\Omega_{\alpha,1}|+|\Omega_{\alpha,2}|<|\Omega|$, by~\eqref{eq:omega0}, we get that $\Omega_{\alpha,1}\cup\Omega_{\alpha,2}$ is strictly contained in $\Omega$. Therefore, 
let $\beta\in \Omega\setminus(\Omega_{\alpha,1}\cup\Omega_{\alpha,2})$. 

Since $\beta\notin \Omega_{\alpha,1}\cup\Omega_{\alpha,2}$, we have  
\begin{equation}\label{eye-1}
|G_\alpha:G_\alpha\cap G_\beta|=|G_\beta:G_\alpha\cap G_\beta|>2.
\end{equation} See Figure~\ref{figureeye0}.
\begin{figure}[!ht]
\begin{tikzpicture}[node distance=1.3cm]
\node at (0,0) (A0) {$G_\alpha$};
\node[right of=A0] (A1) {};
\node[right of=A1] (A2) {$G_\beta$};
\node[below of=A1] (A3) {$G_\alpha\cap G_\beta$};
\draw[-] (A0) -- node[left]{$>2$}(A3);
\draw[-] (A2) -- node[right]{$>2$}(A3);
\end{tikzpicture}
\caption{}\label{figureeye0}
\end{figure} 

From this we deduce
\begin{equation}\label{eye:2}\Omega_{\alpha,1}\cap \Omega_{\beta,1}=\Omega_{\alpha,2}\cap \Omega_{\beta,1}=\Omega_{\alpha,1}\cap \Omega_{\beta,2}=\emptyset.
\end{equation}
Indeed, if for instance $\omega\in \Omega_{\alpha,1}\cap\Omega_{\beta,2}$, then $|\omega^{G_\alpha}|=1$ and $|\omega^{G_\beta}|=2$. Therefore, $|G_\alpha:G_\alpha\cap G_\omega|=1$ and $|G_\beta:G_\beta\cap G_\omega|=2$. As $|G_\alpha:G_\alpha\cap G_\omega|=1$, we get $G_\alpha=G_\omega$. Now, as $|G_\beta:G_\beta\cap G_\omega|=2$ and $G_\alpha=G_\omega$, we get $2=|G_\beta:G_\beta\cap G_\omega|=|G_\beta:G_\beta\cap G_\alpha|$, which contradicts~\eqref{eye-1}. Therefore $\Omega_{\alpha,1}\cap \Omega_{\beta,2}=\emptyset$. The proof for all other equalities in~\eqref{eye:2} is similar.

From~\eqref{eye:2}, we obtain
\begin{equation}\label{eye:3}
(\Omega_{\alpha,1}\cup\Omega_{\alpha,2})\cap (\Omega_{\beta,1}\cup\Omega_{\beta,2})=\Omega_{\alpha,2}\cap\Omega_{\beta,2}.
\end{equation}
Recall that, by hypothesis, $|\Omega_{\alpha,1}\cup\Omega_{\alpha,2}|>|\Omega|/2$. Using this together with~\eqref{eye:3}, we get
\begin{align}\label{eye:4}
|\Omega_{\alpha,2}\cap\Omega_{\beta,2}|&=|(\Omega_{\alpha,1}\cup\Omega_{\alpha,2})\cap (\Omega_{\beta,1}\cup\Omega_{\beta,2})|\\\nonumber
&=|\Omega_{\alpha,1}\cup\Omega_{\alpha,2}|+ |\Omega_{\beta,1}\cup\Omega_{\beta,2}|-|(\Omega_{\alpha,1}\cup\Omega_{\alpha,2})\cup (\Omega_{\beta,1}\cup\Omega_{\beta,2})|\\\nonumber
&\ge |\Omega_{\alpha,1}\cup\Omega_{\alpha,2}|+ |\Omega_{\beta,1}\cup\Omega_{\beta,2}|-|\Omega|\\\nonumber
&>\frac{|\Omega|}{2}+\frac{|\Omega|}{2}-|\Omega|=0.
\end{align}

From~\eqref{eye:4}, we deduce $\Omega_{\alpha,2}\cap\Omega_{\beta,2}\ne\emptyset$. Let $\omega\in \Omega_{\alpha,2}\cap \Omega_{\beta,2}$. In particular, $|\omega^{G_\alpha}|=|\omega^{G_\beta}|=2$. This means that $|G_\alpha:G_\alpha\cap G_\omega|=|G_\beta:G_\beta\cap G_\omega|=2$. Since $|G_\alpha|=|G_\beta|=|G_\omega|$, we get that $G_\alpha\cap G_\omega$ and $G_\beta\cap G_\omega$ have both index $2$ in $G_\omega$. Suppose $G_\alpha\cap G_\omega=G_\beta\cap G_\omega$. Then
$$G_\alpha\cap G_\omega=G_\beta\cap G_\omega=G_\alpha\cap G_\beta\cap G_\omega\le G_\alpha\cap G_\beta$$
and hence
$$|G_\alpha:G_\alpha\cap G_\beta|\le |G_\alpha:G_\alpha\cap G_\omega|=2.$$
However, this contradicts~\eqref{eye-1}.  Therefore, $G_\alpha\cap G_\omega$ and $G_\beta\cap G_\omega$ are two distinct subgroups of $G_\omega$ having index $2$. This yields
\begin{equation}\label{eye:5}G_\omega=(G_\alpha\cap G_\omega)(G_\beta\cap G_\omega),
\end{equation}
for each $\omega\in \Omega_{\alpha,2}\cap\Omega_{\beta,2}$.

From~\eqref{eye:5} and from the fact that $|G_\omega:G_\alpha\cap G_\omega|=|G_\omega:G_\beta\cap G_\omega|=2$, we see that $(G_\alpha\cap G_\omega)\cap (G_\beta\cap G_\omega)=G_\alpha\cap G_\beta\cap G_\omega$ has index $4$ in $G_\omega$. Since $|G_\omega|=|G_\alpha|=|G_\beta|$, we get that $G_\alpha\cap G_\beta\cap G_\omega$ has also index $4$ in $G_\alpha$ and in $G_\beta$.
Since $G_\alpha\cap G_\beta\cap G_\omega\le G_\alpha\cap G_\beta$, we get that $|G_\alpha:G_\alpha\cap G_\beta|=|G_\beta:G_\alpha\cap G_\beta|$ divides $|G_\alpha:G_\alpha\cap G_\beta\cap G_\omega|=4$. As $|G_\alpha:G_\alpha\cap G_\beta|=|G_\beta:G_\alpha\cap G_\beta|>2$, we get 
$G_\alpha\cap G_\beta\cap G_\omega= G_\alpha\cap G_\beta$ 
and
 $$|G_\alpha:G_\alpha\cap G_\beta|=|G_\beta:G_\alpha\cap G_\beta|=4.$$ We have summarized this paragraph in Figure~\ref{figureeye1}.  In other words, $\beta\in \Omega_{\alpha,4}$.
\begin{figure}[!ht]
\begin{tikzpicture}[node distance=2.5cm]
\node at (0,0) (A0) {$G_\omega=(G_\alpha\cap G_\omega)(G_\beta\cap G_\omega)$};
\node[left of=A0] (A1) {};
\node[left of=A1](AA1){$G_\alpha$};
\node[right of=A0] (A2) {};
\node[right of=A2](AA2){$G_\beta$};
\node[below of=A1] (A3) {$G_\alpha\cap G_\omega$};
\node[below of=A2] (A4){$G_\beta\cap G_\omega$};
\node[below of=A0] (B){};
\node[below of=B] (A5){$G_\alpha\cap G_\beta$};
\draw[-] (A0) -- node[left]{2}(A3);
\draw[-] (AA1) -- node[left]{2}(A3);
\draw[-] (A0) -- node[right]{2}(A4);
\draw[-] (AA2) -- node[right]{2}(A4);
\draw[-] (A3) -- node[left]{2}(A5);
\draw[-] (A4) -- node[right]{2}(A5);
\end{tikzpicture}
\caption{}\label{figureeye1}
\end{figure}

Since $\beta$ is an arbitrary element in $\Omega\setminus(\Omega_{\alpha,1}\cup\Omega_{\alpha,2})$, we have proven part~\eqref{eq:lemma-41}. Now, as $\Omega_{\alpha,4}=\Omega\setminus(\Omega_{\alpha,1}\cup\Omega_{\alpha,2})$, part~\eqref{eq:lemma-42} follows from~\eqref{eye:4} and part~\eqref{eq:lemma-43} follows from~\eqref{eye:5}. 
\end{proof}

\begin{lemma}\label{lemma:4esteso}
Let $G$ be a finite permutation group on a set $\Omega$ and let $\alpha\in \Omega$. If $\Omega=\Omega_{\alpha,1}\cup\Omega_{\alpha,2}\cup\Omega_{\alpha,4}$ and $|\Omega_{\alpha,1}|=|\Omega_{\alpha,4}|$, then $\Omega_{\alpha,1}\cup\Omega_{\alpha,4}$ is a block of imprimitivity for $G$. Moreover,  ${\bf N}_G(G_\alpha)={\bf N}_G(G_\beta)$.
\end{lemma}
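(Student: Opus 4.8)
\textbf{Proof proposal for Lemma~\ref{lemma:4esteso}.}

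The plan is to first identify the set $\Omega_{\alpha,1}\cup\Omega_{\alpha,4}$ with an orbit of a subgroup of $G$ containing $G_\alpha$, which automatically makes it a block of imprimitivity. The natural candidate is $\Omega_{\alpha,1}\cup\Omega_{\alpha,4}=\alpha^{{\bf N}_G(G_\alpha)}$, but since $\Omega_{\alpha,1}=\alpha^{{\bf N}_G(G_\alpha)}$ already (by the discussion in Section~\ref{notation}), this cannot be literally true; instead one expects $\Omega_{\alpha,1}\cup\Omega_{\alpha,4}$ to be the orbit of a slightly larger group $M$ with $G_\alpha\le M$ and $|M:G_\alpha|=|\Omega_{\alpha,1}\cup\Omega_{\alpha,4}|=2d$, where $d=|\Omega_{\alpha,1}|$. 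The cleanest route is the following counting argument. Recall from~\eqref{def:xi} that $|\Omega_{\alpha,i}|=d\,x_i$ with $x_1=1$; the hypothesis $|\Omega_{\alpha,1}|=|\Omega_{\alpha,4}|$ says $x_4=1$, and $\Omega=\Omega_{\alpha,1}\cup\Omega_{\alpha,2}\cup\Omega_{\alpha,4}$ gives $|\Omega|=d(2+x_2)$. First I would establish the key structural fact that for any $\beta\in\Omega_{\alpha,4}$, the intersection $\Omega_{\alpha,4}\cap\Omega_{\beta,4}$ is exactly $\{$the $\Omega_{\alpha,1}$-block through $\beta$ together with\ldots$\}$ — more precisely, that the relation ``$\beta\in\Omega_{\alpha,1}\cup\Omega_{\alpha,4}$'' is symmetric and transitive on $\Omega$, hence an equivalence relation whose classes are the desired blocks. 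Symmetry is clear since $|G_\alpha:G_\alpha\cap G_\beta|=|G_\beta:G_\alpha\cap G_\beta|$. Transitivity is the crux.

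For transitivity I would exploit the very rigid subgroup lattice forced by the hypothesis. Fix $\beta\in\Omega_{\alpha,4}$. By Lemma~\ref{lemma:-4}\eqref{eq:lemma-42}--\eqref{eq:lemma-43} (whose hypothesis $|\Omega|/2<|\Omega_{\alpha,1}|+|\Omega_{\alpha,2}|<|\Omega|$ needs to be checked, or an independent argument given when it fails) there is $\omega\in\Omega_{\alpha,2}\cap\Omega_{\beta,2}$ with $G_\omega=(G_\alpha\cap G_\omega)(G_\beta\cap G_\omega)$ and $G_\alpha\cap G_\beta=G_\alpha\cap G_\beta\cap G_\omega$ of index $4$ in each of $G_\alpha,G_\beta,G_\omega$. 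The subgroup $M:=\langle G_\alpha,G_\beta\rangle$ then sits inside a group in which $G_\alpha\cap G_\beta$ has small index; I would argue $|M:G_\alpha|=2$, so that $M=G_\alpha\cup G_\alpha g$ for some $g$ with $\alpha^g=\beta$, using that $G_\alpha G_\beta$ (if it were a subgroup) would have order $|G_\alpha|^2/|G_\alpha\cap G_\beta|=4|G_\alpha|$, which is too big for a point stabilizer acting with the prescribed subdegrees — a contradiction unless the product collapses. The clean statement to aim for is: $G_\alpha$ has index $2$ in ${\bf N}_G(G_\alpha)$-extended-by-$\beta$, i.e. $\{\alpha\}\cup\Omega_{\alpha,4}/{\sim}$ glue into blocks of size $2d$. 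Once $\Omega_{\alpha,1}\cup\Omega_{\alpha,4}$ is shown to be a block $\Delta$, transitivity of $G$ moves $\Delta$ around to a block system, completing part one.

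For the ``moreover'' statement ${\bf N}_G(G_\alpha)={\bf N}_G(G_\beta)$: here $\beta$ should be read as a point with $\{\alpha,\beta\}$ (or rather $\Omega_{\alpha,1}\cup\Omega_{\alpha,4}$) a block, equivalently $G_\beta$ in the ``same'' position as $G_\alpha$. Since $\Omega_{\beta,1}\cup\Omega_{\beta,4}$ is also a block of the same system and contains $\beta$, and since $\beta\in\Omega_{\alpha,1}\cup\Omega_{\alpha,4}$ forces (by the block structure) $\Omega_{\alpha,1}\cup\Omega_{\alpha,4}=\Omega_{\beta,1}\cup\Omega_{\beta,4}=:\Delta$, I would show the setwise stabilizer $G_\Delta$ normalizes both $G_\alpha$ and $G_\beta$: indeed $\Omega_{\alpha,1}=\mathrm{Fix}_\Omega(G_\alpha)$ is determined by $G_\alpha$, and inside the block $\Delta$ the group $G_\Delta$ acts with $G_\alpha$ being the stabilizer of the sub-block $\Omega_{\alpha,1}$; one checks $G_\alpha\unlhd G_\Delta$ because $\Omega_{\alpha,1}$ has index $2$ in $\Delta$ (so is itself a block inside $\Delta$, preserved by $G_\Delta$), giving ${\bf N}_G(G_\alpha)\supseteq G_\Delta$, and conversely ${\bf N}_G(G_\alpha)=G_{\Omega_{\alpha,1}}\subseteq G_\Delta$ since $\Delta\supseteq\Omega_{\alpha,1}$ is canonically attached. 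The same for $\beta$, and equality follows.

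The main obstacle I anticipate is transitivity of the relation / showing $|M:G_\alpha|=2$: a priori $\langle G_\alpha,G_\beta\rangle$ could be much larger than $G_\alpha$, and one must use the global hypothesis $\Omega=\Omega_{\alpha,1}\cup\Omega_{\alpha,2}\cup\Omega_{\alpha,4}$ (no subdegree $3$, no subdegree $\ge 5$) to rule this out. I expect the argument to pass through Lemma~\ref{lemma:-4}, so a preliminary lemma verifying $|\Omega|/2<|\Omega_{\alpha,1}|+|\Omega_{\alpha,2}|<|\Omega|$ under the present hypotheses — or a separate treatment of the boundary case $x_2=0$ (where $\Omega=\Omega_{\alpha,1}\cup\Omega_{\alpha,4}$ is itself a single block trivially) — will be needed.
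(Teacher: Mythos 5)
There is a genuine gap, in fact two. Your device for producing the block is the subgroup $M:=\langle G_\alpha,G_\beta\rangle$ with the claim $|M:G_\alpha|=2$; this is impossible under the hypotheses: since $\beta\in\Omega_{\alpha,4}$ we have $|G_\beta:G_\alpha\cap G_\beta|=4$, and $G_\beta\le M$ forces $|M:G_\alpha|\ge |G_\alpha G_\beta|/|G_\alpha|=4$. Moreover $\langle G_\alpha,G_\beta\rangle$ need not contain any element carrying $\alpha$ to $\beta$, so its orbit on $\alpha$ need not even reach $\beta$. The observation you are missing (and which makes the lemma almost immediate, with no appeal to Lemma~\ref{lemma:-4} --- whose hypothesis, as you note, is not available here) is the counting step: every $\delta\in\Omega_{\beta,1}$ has $G_\delta=G_\beta$, hence $|\delta^{G_\alpha}|=|G_\alpha:G_\alpha\cap G_\beta|=4$, so $\Omega_{\beta,1}\subseteq\Omega_{\alpha,4}$; since $|\Omega_{\beta,1}|=|\Omega_{\alpha,1}|=|\Omega_{\alpha,4}|$ this gives $\Omega_{\beta,1}=\Omega_{\alpha,4}$, and symmetrically $\Omega_{\beta,4}=\Omega_{\alpha,1}$. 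Consequently any $g\in G$ with $\alpha^g=\beta$ interchanges $\Omega_{\alpha,1}$ and $\Omega_{\alpha,4}$, conjugation by $g$ preserves ${\bf N}_G(G_\alpha)$ (which is exactly the setwise stabilizer of $\Omega_{\alpha,1}$), and the overgroup $T:={\bf N}_G(G_\alpha)\langle g\rangle$ of $G_\alpha$ has $\alpha^T=\Omega_{\alpha,1}\cup\Omega_{\alpha,4}$, which is therefore a block. Your alternative "equivalence relation" framing could also be made to work from the same identities $\Omega_{\beta,1}=\Omega_{\alpha,4}$, $\Omega_{\beta,4}=\Omega_{\alpha,1}$, but you never establish transitivity; the route you sketch for it is the flawed $|M:G_\alpha|=2$ claim.

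The argument you give for the ``moreover'' part is also incorrect. You assert that the setwise stabilizer $G_\Delta$ of the block $\Delta=\Omega_{\alpha,1}\cup\Omega_{\alpha,4}$ normalizes $G_\alpha$ because $\Omega_{\alpha,1}$, being a sub-block of index $2$ in $\Delta$, is ``preserved by $G_\Delta$''. It is not: a sub-block may be swapped with its complement, and indeed any $g\in G$ with $\alpha^g=\beta$ lies in $G_\Delta$ yet conjugates $G_\alpha$ to $G_\beta\ne G_\alpha$ and maps $\Omega_{\alpha,1}$ onto $\Omega_{\alpha,4}$. So $G_\Delta\not\subseteq{\bf N}_G(G_\alpha)$ and ${\bf N}_G(G_\alpha)\ne G_\Delta$ in general; rather ${\bf N}_G(G_\alpha)$ has index $2$ in $T$. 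The equality ${\bf N}_G(G_\alpha)={\bf N}_G(G_\beta)$ comes instead from the conjugation computation: for $x\in{\bf N}_G(G_\alpha)$ one checks $\Omega_{\alpha,1}^{g^{-1}xg}=\Omega_{\alpha,1}$ using $\Omega_{\alpha,1}^{g^{-1}}=\Omega_{\beta,4}^{g^{-1}}\ldots$, i.e. $({\bf N}_G(G_\alpha))^g={\bf N}_G(G_\alpha)$, whence ${\bf N}_G(G_\beta)=({\bf N}_G(G_\alpha))^g={\bf N}_G(G_\alpha)$; equivalently, one can argue that normalizing $G_\alpha$ forces stabilizing $\Omega_{\alpha,4}=\Omega_{\beta,1}$ setwise and vice versa. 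As it stands, both halves of your proposal rest on false intermediate claims, so the proof is not salvageable without the corrections above.
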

\begin{proof}
Let $\beta\in\Omega_{\alpha,4}$. As $\Omega_{\beta,1}\subseteq \Omega_{\alpha,4}$ and as $\Omega_{\beta,1}$ and $\Omega_{\alpha,4}$ have the same cardinality, we deduce $\Omega_{\alpha,4}=\Omega_{\beta,1}$. Analogously, $\Omega_{\beta,4}=\Omega_{\alpha,1}$.

Let $g\in G$ with $\beta=\alpha^g$. Now, we have
$$(\Omega_{\alpha,4})^g=\Omega_{\alpha^g,4}=\Omega_{\beta,4}=\Omega_{\alpha,1}.$$
Analogously, $\Omega_{\alpha,1}^g=\Omega_{\alpha,4}$. 
So,
$$\Omega_{\alpha,1}^g=\Omega_{\alpha,4}\hbox{ and }\Omega_{\alpha,4}^g=\Omega_{\alpha,1}.$$
Therefore, $(\Omega_{\alpha,1}\cup\Omega_{\alpha,4})^g=\Omega_{\alpha,1}\cup\Omega_{\alpha,4}$ and $g^2$ fixes setwise $\Omega_{\alpha,1}$ and $\Omega_{\alpha,4}$.

Since $\Omega_{\alpha,1}$ is a block of imprimitivity for $G$ with setwise stabilizer ${\bf N}_G(G_\alpha)$, we deduce $g^2\in {\bf N}_G(G_\alpha)$. Set $T:=\langle {\bf N}_G(G_\alpha),g\rangle$.

Since $G_\alpha$ fixes setwise $\Omega_{\alpha,1}\cup\Omega_{\alpha,4}$, we deduce that $G_\alpha$ fixes setwise also $\Omega_{\alpha,4}=\Omega_{\beta,1}$. Now, for every $x\in {\bf N}_G(G_\alpha)$, we have
$$\Omega_{\alpha,1}^{g^{-1}\alpha g}=(\Omega_{\alpha,1}^{g^{-1}})^{xg}=\Omega_{\beta,1}^{xg}=(\Omega_{\beta,1}^x)^g=\Omega_{\beta,1}^g=\Omega_{\alpha,1}.$$
Thus $g^{-1}xg$ fixes setwise $\Omega_{\alpha,1}$ and hence $g^{-1}xg\in {\bf N}_G(G_\alpha)$. This yields $${\bf N}_G(G_\beta)={\bf N}_G(G_{\alpha^g})=({\bf N}_G(G_\alpha))^g={\bf N}_G(G_\alpha).$$

As $g$ normalizes ${\bf N}_G(G_\alpha)$, we have $T={\bf N}_G(G_\alpha)\langle g\rangle$ and
$$\alpha^T=(\alpha^{{\bf N}_G(G_\alpha)})^{\langle g\rangle}=\Omega_{\alpha,1}^{\langle g\rangle}=\Omega_{\alpha,1}\cup\Omega_{\alpha,4}.$$
 Now, since $T$ is an overgroup of $G_\alpha$ and since $\Omega_{\alpha,1}\cup\Omega_{\alpha,4}$ is the $T$-orbit containing $\alpha$, we deduce that $\Omega_{\alpha,1}\cup\Omega_{\alpha,4}$ is a block of imprimitivity for $G$.
\end{proof}

We now need two rather technical lemmas, at first they seem out of context, but their relevance is pivotal in the  proof of Lemma~\ref{lemma:-3}. We could phrase Lemma~\ref{lemma:44esteso} in a purely group theoretic terminology, but it is easier to state in our opinion using some terminology from graph theory.

\begin{lemma}\label{lemma:44esteso}
Let $G$ be a group, let $X$ be an elementary abelian $2$-subgroup of $G$, let $Y$ be a $G$-conjugate of $X$ with $Z:=X\cap Y$ having index $4$ in $X$ and in $Y$. Let $\Lambda_X:=\{X_1,X_2,X_3\}$ and $\Lambda_{Y}:=\{Y_1,Y_2,Y_3\}$ be the collection of the proper subgroups of $X$ and $Y$, respectively, properly containing $Z$.

Let $\Gamma$ be the bipartite graph having vertex set $\Lambda_X\cup\Lambda_Y$, where a pair $\{X_i,Y_j\}$
 is declared to be adjacent if $X_iY_j$ is a subgroup of $G$ conjugate to $X$ via an element of $G$. 
 If $\Gamma$ has at least $6$ edges, then $X$ commutes with $Y$.
\end{lemma}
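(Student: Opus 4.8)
First I would set up coordinates. Since $X$ is elementary abelian, $Z$ has index $4$ in $X$, so $X/Z$ is a $2$-dimensional $\mathbb{F}_2$-vector space and $\Lambda_X = \{X_1,X_2,X_3\}$ corresponds exactly to its three $1$-dimensional subspaces; note that any two of the $X_i$ generate $X$, and $X_1X_2 = X_1X_3 = X_2X_3 = X$. The same holds for $Y$. The crucial structural observation is that for each fixed $X_i$ the three sets $X_iY_1, X_iY_2, X_iY_3$ are subgroups of $G$ \emph{only if they behave well under commutation}: if $X_i$ normalizes $Y$ (equivalently $X_iY_j$ is a subgroup for one $j$, hence a subgroup of $X_iY = Y_i'X$ for suitable indexing), then all three $X_iY_j$ are subgroups, whereas if $X_i$ does not normalize $Y$ the count of $j$ with $X_iY_j$ a subgroup is restricted. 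So the plan is to bound, for each vertex $X_i$, its degree in $\Gamma$ by case analysis on how $X_i$ interacts with $Y$, and symmetrically for each $Y_j$.

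Here is the key local lemma I would prove. Suppose $\{X_i,Y_j\}$ is an edge, so $P := X_iY_j$ is a subgroup conjugate to $X$ (in particular $|P| = |X|$, and $P$ is elementary abelian since it is conjugate to the elementary abelian group $X$). Then $X_i, Y_j \le P$ and $P$ is abelian, so $[X_i,Y_j] = 1$. Moreover $X_i \cap Y_j$: both have index $4$ in $P$... actually I should be careful about the index of $X_i$ in $P$. We have $X_i \le P$ with $|X_i| = |X|/2$ and $|P| = |X|$, so $|P:X_i| = 2$ and likewise $|P:Y_j| = 2$; since $X_iY_j = P$ and $X_i \ne Y_j$, we get $X_i \cap Y_j$ of index $4$ in $P$, i.e. $X_i\cap Y_j = X_i \cap Y_j \le Z$ forced by counting ($|X_i\cap Y_j| = |X|/4 = |Z|$, and $Z \le X_i \cap Y \le$ ...). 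The upshot I want: \emph{every edge $\{X_i,Y_j\}$ forces $[X_i,Y_j]=1$}, and conversely I'd like a converse strong enough that having $6$ edges forces $[X,Y]=1$. Since $X = X_iX_{i'}$ for $i \ne i'$, to conclude $[X,Y]=1$ it suffices to find two distinct indices $i, i'$ and, for \emph{each} of them, two distinct indices $j$ with $\{X_i,Y_j\}$ and $\{X_{i'},Y_j\}$ edges — because $[X_i, Y_j] = [X_i,Y_{j'}] = 1$ with $Y = Y_jY_{j'}$ gives $[X_i, Y] = 1$, and then $[X_i,Y] = [X_{i'},Y] = 1$ gives $[X,Y] = [X_iX_{i'}, Y] = 1$. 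So the combinatorial heart is: \emph{in a bipartite graph on $3+3$ vertices with $\ge 6$ edges, there exist two vertices on one side each having degree $\ge 2$} — equivalently, at most one left-vertex has degree $\le 1$. If two left-vertices had degree $\le 1$, the total degree is $\le 1 + 1 + 3 = 5 < 6$, contradiction. Symmetrically at most one right-vertex has degree $\le 1$. Hence at least two left-vertices, say $X_i, X_{i'}$, have degree $\ge 2$; pick neighbours giving the conclusion as above, \emph{provided} for each of $X_i, X_{i'}$ we can choose the \emph{pair} of neighbours and the two generate $Y$ — which they automatically do since any two of $Y_1,Y_2,Y_3$ generate $Y$.

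Wait — there is a gap: from $X_i$ having degree $\ge 2$ I get $[X_i,Y]=1$, and from $X_{i'}$ having degree $\ge 2$ I get $[X_{i'},Y]=1$, and since $X = \langle X_i, X_{i'}\rangle$ this already gives $[X,Y]=1$. So I do not even need to coordinate the choices between $X_i$ and $X_{i'}$; each high-degree left-vertex independently contributes $[X_i,Y]=1$, and two of them span $X$. Thus the proof reduces to: (1) an edge $\{X_i,Y_j\}$ implies $[X_i,Y_j]=1$ — this is the ``$P$ is elementary abelian, $X_i,Y_j\le P$'' step, which needs the observation that a subgroup of $G$ conjugate to $X$ is itself elementary abelian, and the index count $|P:X_i|=|P:Y_j|=2$ forcing $X_iY_j$ to genuinely be $P$; (2) the degree-counting argument that $6$ edges force $\ge 2$ left-vertices of degree $\ge 2$; (3) $\langle X_i, X_{i'}\rangle = X$ for $i\ne i'$. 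The main obstacle is step (1): making sure that whenever $X_iY_j$ is declared a subgroup conjugate to $X$, it really contains $X_i$ and $Y_j$ with the right indices so that abelianness of a conjugate of $X$ can be invoked to kill the commutator $[X_i,Y_j]$; one must rule out degenerate possibilities (e.g.\ $X_i = Y_j$, impossible as they generate a group of order $|X| > |X_i|$ only if $X_i \ne Y_j$, which holds since $X_i \le X$, $Y_j\le Y$ and $X_i = Y_j$ would force $X_i \le Z$, contradicting $Z < X_i$). Once step (1) is secured the rest is the short double-counting and generation argument sketched above, and I would write it in exactly that order: coordinatize $X/Z$ and $Y/Z$; prove the per-edge commuting lemma; run the degree count; conclude $[X,Y]=1$.
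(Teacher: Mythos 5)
Your proposal is correct and follows essentially the same route as the paper: an edge $\{X_i,Y_j\}$ makes $X_iY_j$ a conjugate of $X$, hence elementary abelian, so $[X_i,Y_j]=1$; a vertex of degree $\ge 2$ then commutes with all of $Y$ (two of the $Y_j$ generate $Y$), and two such vertices on the $X$-side generate $X$, while the $\ge 6$-edge count forces two degree-$\ge 2$ vertices on one side. If anything, your explicit count ($1+1+3=5<6$) pins down that the two high-valency vertices can be taken on the same side, a point the paper passes over with a quick ``by symmetry, without loss of generality.''
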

\begin{proof}
Suppose that 
\begin{center}
$(\ast)\quad$  there exist two distinct vertices of $\Gamma$ having valency at least $2$.
\end{center} By symmetry, without loss of generality, we suppose that these two vertices are in $\Lambda_X$. Thus suppose that $X_i,X_j\in \Lambda_X$  have valency at least $2$ in $\Gamma$.

Let $Y_{i_1}$ and $Y_{i_2}$ be two neighbours of $X_i$ in $\Gamma$. Then, by definition, $X_iY_{i_1}$ and $X_iY_{i_2}$ are both subgroups of $G$ conjugate to $X$. Therefore, $X_iY_{i_1}$ and $X_iY_{i_2}$ are elementary abelian $2$-groups and hence $X_i$ commutes with both $Y_{i_1}$ and $Y_{i_2}$. Since $\langle Y_{i_1},Y_{i_2}\rangle=Y$, we deduce that $X_i$ commutes with $Y$.

Arguing as in the paragraph above with $X_i$ replaced by $X_j$, we deduce that $X_j$ commutes with $Y$. Therefore, $X=\langle X_i,X_j\rangle$ commutes with $Y$.

Now, it is elementary to see that every bipartite graph on six vertices, with parts having cardinality $3$ and having at least $6$ edges has the property $(\ast)$.
\end{proof}

Recally that a graph $\Gamma$ is said to be vertex-transitive if its automorphism group acts transitively on the vertices of $\Gamma$. Given a vertex $\omega$ of $\Gamma$, we denote by $\Gamma(\omega)$ the neighbourhood of $\omega$ in $\Gamma$.
\begin{lemma}\label{lemma:444esteso}
Let $\Gamma$ be a finite vertex-transitive graph having valency $2$, let $V$ be the set of vertices of $\Gamma$, let $\omega_1,\omega_2$ be two adjacent vertices of $\Gamma$ and let $W$ be a subset of $V$ containing $\omega_1$ and $\omega_2$ and with the property that, for any two distinct vertices $\delta_1,\delta_2$ in $W$, $V\setminus (\Gamma(\delta_1)\cup\Gamma(\delta_2))\subseteq W$. Then either $W=V$ or $|V|\le 6$.
\end{lemma}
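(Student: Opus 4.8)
\textbf{Proof proposal for Lemma~\ref{lemma:444esteso}.}

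The plan is to analyse the connected components of the valency-$2$ graph $\Gamma$ together with the closure condition on $W$. Since $\Gamma$ is vertex-transitive of valency $2$, every connected component is a cycle, and by vertex-transitivity all components are cycles $C_n$ of the same length $n\ge 3$ (a valency-$2$ vertex-transitive graph has no $C_2$'s or paths, since those would force vertices of different valencies). So $|V|=kn$ for some number $k\ge 1$ of components, with $\omega_1,\omega_2$ lying on a common cycle. I would first treat the case where $\Gamma$ is connected, i.e.\ a single cycle $C_n$, and then reduce the disconnected case to it.

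For the connected case, write the cycle as $v_0,v_1,\dots,v_{n-1}$ with $\omega_1=v_0$, $\omega_2=v_1$. The key computation is: for two vertices $v_a,v_b$ at cyclic distance $2$ (say $b=a+2$), the set $\Gamma(v_a)\cup\Gamma(v_b)=\{v_{a-1},v_{a+1},v_{b-1},v_{b+1}\}=\{v_{a-1},v_{a+1},v_{a+3}\}$, so its complement contains all but (at most) three vertices; and for adjacent $v_a,v_{a+1}$ we get $\Gamma(v_a)\cup\Gamma(v_{a+1})=\{v_{a-1},v_{a+1},v_a,v_{a+2}\}$. The strategy is to use the closure property to show $W$ contains more and more vertices. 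Starting from $\{\omega_1,\omega_2\}=\{v_0,v_1\}\subseteq W$: the closure rule applied to this pair puts $V\setminus\{v_{-1},v_0,v_1,v_2\}=V\setminus\{v_{n-1},v_0,v_1,v_2\}$ into $W$. So if $n\ge 7$, $W$ already contains at least three vertices outside $\{v_{n-1},v_0,v_1,v_2\}$, in particular some vertex at distance $\ge 3$ from $v_0$; then I would iterate, pairing up newly acquired vertices and old ones, to sweep in the remaining four vertices $v_{n-1},v_2$ and so on, eventually forcing $W=V$. A clean way to organise the iteration: show that once $W$ contains two vertices whose neighbourhoods together miss some target vertex $t$, then $t\in W$; and by choosing the two vertices appropriately among those already known to lie in $W$ (there are many once $n$ is large), one can hit every target. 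The small cases $n\in\{3,4,5,6\}$ are exactly the ones allowed by the conclusion $|V|\le 6$, so they need no argument beyond noting $|V|=n\le 6$.

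For the disconnected case with $k\ge 2$ components each a cycle $C_n$: if $n\ge 3$ then for any two vertices $\delta_1,\delta_2$, the union $\Gamma(\delta_1)\cup\Gamma(\delta_2)$ has size at most $4$, so its complement is nonempty and in fact contains an entire component disjoint from $\delta_1,\delta_2$ except possibly for up to four vertices; pairing $\omega_1$ with any vertex forces almost all of every other component into $W$, and then internal pairing within each component (using the connected-case mechanism, now with the help of vertices already in $W$) finishes. One must be slightly careful that $|V|=kn$ could be small, e.g.\ $k=2,n=3$ gives $|V|=6$, which is again permitted; but for $|V|>6$ we have either $k\ge 2,n\ge 4$ or $k\ge 3,n\ge 3$ or $k=1,n\ge 7$, and in all of these the sweeping argument goes through.

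I expect the main obstacle to be organising the iteration cleanly so as to avoid a tedious case analysis on $n\bmod$ small numbers: one wants a uniform statement like ``if $W\ne V$ and $|V|\ge 7$ then $W$ can be strictly enlarged,'' but the closure operation removes up to four vertices near the chosen pair, so naively the enlargement could keep missing the same handful of vertices. The fix is to observe that because $W$ is closed and (once $|V|\ge 7$) large, it contains pairs of vertices whose ``forbidden'' neighbourhoods are disjoint from any prescribed vertex $t$ — concretely, choosing the two vertices of the pair to be at distance $\ge 4$ from $t$ within a single cycle (possible when the cycle is long, or by using two vertices in a component not containing $t$ when $k\ge 2$) — so every $t\notin W$ gets forced in, a contradiction. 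Making the choice of such a pair explicit, and checking it is always available precisely when $|V|>6$, is the crux.
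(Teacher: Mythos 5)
Your proposal takes essentially the same route as the paper: the paper likewise notes that $\Gamma$ is a disjoint union of cycles of equal length and simply asserts that $W=V$ is ``easily checked'' when the cycle length is at least $7$ or the graph is disconnected, which is exactly the verification you sketch (and your case list for $|V|>6$ matches). One small correction to your final paragraph: the pair $\delta_1,\delta_2\in W$ need only avoid the two neighbours of the target vertex $t$ (as you yourself state earlier), not lie at distance $\ge 4$ from $t$ --- the latter is impossible in $C_7$ or $C_8$, whereas non-adjacent choices are always available once the first closure step has placed all but two vertices of the cycle in $W$.
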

\begin{proof}Since $\Gamma$ is vertex-transitive of valency $2$, $\Gamma$ is a disjoint union $s$ of cycles of the same length $\ell$. If $\ell\ge 7$ or if $\Gamma$ is disconnected, that is, $s\ge 2$, it can be easily checked that $W=V$.
\end{proof}

\begin{lemma}\label{lemma:-3}
Let $G$ be a finite permutation group on a set $\Omega$ and let $\alpha\in \Omega$. If
$$\frac{|\Omega|}{2}<|\Omega_{\alpha,1}|+|\Omega_{\alpha,2}|<|\Omega|,$$
then one of the following holds
\begin{enumerate}[(a)]
\item\label{lemma:-30}$|\Omega_{\alpha,1}|+|\Omega_{\alpha,2}|< 5|\Omega|/6$, or
\item\label{lemma:-31}
\begin{enumerate}[(i)]
\item\label{lemma:-322}$|\Omega_{\alpha,4}|\le 2|\Omega_{\alpha,1}|$, and
\item\label{lemma:-32}$G_\alpha$ is  an elementary abelian $2$-group, and
\item\label{lemma:-33}$G_\alpha$  commutes with $G_\beta$, for every $\beta\in \Omega_{\alpha,4}$, and
\item\label{lemma:-34}$\langle G_\alpha,G_\beta\rangle=G_\alpha\times G_\beta$ is an elementary abelian normal $2$-subgroup of $G$ of order $16$, for every $\beta\in \Omega_{\alpha,4}$.
\end{enumerate}
\end{enumerate}

\end{lemma}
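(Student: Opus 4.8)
The plan is to assume that (a) fails, i.e. that $N:=|\Omega_{\alpha,1}|+|\Omega_{\alpha,2}|\ge\frac56|\Omega|$, and to deduce (i)--(iv). Write $d:=|\Omega_{\alpha,1}|$ and $x_i:=|\Omega_{\alpha,i}|/d$, so $x_1=1$, $|\Omega|=d(1+x_2+x_4)$, and failure of (a) is exactly $x_4\le\frac15(1+x_2)$. Lemma~\ref{lemma:-4} gives $\Omega=\Omega_{\alpha,1}\sqcup\Omega_{\alpha,2}\sqcup\Omega_{\alpha,4}$ with $\Omega_{\alpha,4}\ne\emptyset$ (so $x_4\ge1$, and $x_2\ge1$ since $\Omega_{\alpha,2}\cap\Omega_{\beta,2}\ne\emptyset$ for $\beta\in\Omega_{\alpha,4}$), together with the identities: for every $\beta\in\Omega_{\alpha,4}$, $|G_\alpha:G_\alpha\cap G_\beta|=4$, and for every $\omega\in\Omega_{\alpha,2}\cap\Omega_{\beta,2}$, $G_\omega=(G_\alpha\cap G_\omega)(G_\beta\cap G_\omega)$ with the two factors of index $2$ and $G_\alpha\cap G_\beta\cap G_\omega=G_\alpha\cap G_\beta$. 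Fix $\beta\in\Omega_{\alpha,4}$ and set $Z:=G_\alpha\cap G_\beta$. First I would rule out $G_\alpha/Z\cong C_4$: a cyclic quotient of order $4$ has a unique subgroup of index $2$, so $G_\alpha\cap G_\omega$, $G_\beta\cap G_\omega$, and hence $G_\omega$, would be independent of $\omega\in\Omega_{\alpha,2}\cap\Omega_{\beta,2}$, forcing all such $\omega$ into one size-$d$ block and $|\Omega_{\alpha,2}\cap\Omega_{\beta,2}|\le d$; but inclusion--exclusion (as in the proof of Lemma~\ref{lemma:-4}) gives $|\Omega_{\alpha,2}\cap\Omega_{\beta,2}|\ge 2N-|\Omega|\ge\frac23|\Omega|>\frac12|\Omega|\ge d$, a contradiction. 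So $G_\alpha/Z\cong C_2\times C_2$.

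Next I would set up the bipartite graph $\Gamma$ of Lemma~\ref{lemma:44esteso} for $X:=G_\alpha$, $Y:=G_\beta$: each $\omega\in\Omega_{\alpha,2}\cap\Omega_{\beta,2}$ contributes the edge $\{G_\alpha\cap G_\omega,\,G_\beta\cap G_\omega\}$ (the product $G_\omega$ being $G$-conjugate to $G_\alpha$), two points give the same edge exactly when they share a stabilizer, i.e. lie in one size-$d$ block, and such a block lies inside $\Omega_{\alpha,2}$; hence $|\Omega_{\alpha,2}\cap\Omega_{\beta,2}|=d\cdot e$ with $e\le x_2$ and $e\le|E(\Gamma)|\le 9$. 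Combining $d\cdot e\ge 2N-|\Omega|=d(1+x_2-x_4)$ with $x_4\le\frac15(1+x_2)$ yields $e\ge\frac45(1+x_2)$; with $e\le x_2$ this forces $x_2\ge4$ and $e\ge4$, and with $e\le 9$ it gives $x_2\le 8+x_4$, hence $x_4\le\frac94$, i.e. $x_4\le2$ — this is (i). Since a bipartite graph with parts of size $3$ and at least $4$ edges always has two vertices of valency $\ge2$, the argument in the proof of Lemma~\ref{lemma:44esteso} (property $(\ast)$) applies and shows $G_\alpha$ centralizes $G_\beta$; as $\beta\in\Omega_{\alpha,4}$ was arbitrary, (iii) follows. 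For (ii): since $G_\alpha/(G_\alpha\cap G_{\beta'})\cong C_2\times C_2$ for every $\beta'\in\Omega_{\alpha,4}$, every odd-order element and every square of $G_\alpha$ lies in $\bigcap_{\beta'\in\Omega_{\alpha,4}}G_{\beta'}$; such an element also lies in $G_\alpha$, hence fixes $\Omega_{\alpha,1}=\mathrm{Fix}_\Omega(G_\alpha)$ and $\Omega_{\alpha,4}$ pointwise, and — being a square or odd-order element acting with $G_\alpha$-orbits of size $\le2$ on $\Omega_{\alpha,2}$ — fixes $\Omega_{\alpha,2}$ pointwise too, so by faithfulness it is trivial. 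Thus $G_\alpha$ is elementary abelian of $2$-power order, and $H:=\langle G_\alpha,G_\beta\rangle=G_\alpha G_\beta$ is elementary abelian of order $|G_\alpha|^2/|Z|=4|G_\alpha|$.

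It then remains to prove (iv): $|G_\alpha|=4$ (equivalently $Z=1$, equivalently $|H|=16$) and $H\trianglelefteq G$. The crucial step is to exclude $x_4=2$; here the blocks $\mathcal B:=\{\mathrm{Fix}_\Omega(G_\gamma):\gamma\in\Omega\}$ carry a $G$-invariant vertex-transitive graph of valency $x_4=2$ (join two blocks when each point of one lies in a suborbit of size $4$ of a point of the other), and I would apply Lemma~\ref{lemma:444esteso} to an appropriate subset of $\mathcal B$ — morally the blocks meeting $\Omega_{\alpha,1}\cup\Omega_{\alpha,2}$, closed under the operation of the lemma — to force $|\mathcal B|=|\Omega|/d\le6$; but $x_4=2$ forces $x_2\ge9$ and $|\Omega|/d=3+x_2\ge12$, a contradiction. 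Hence $x_4=1$, and then the usual index computation gives $\Omega_{\beta,1}=\Omega_{\alpha,4}$, $\Omega_{\beta,4}=\Omega_{\alpha,1}$, and therefore $\Omega_{\beta,2}=\Omega_{\alpha,2}$. A nontrivial $z\in Z$ would fix $\Omega_{\alpha,1}$ (as $z\in G_\alpha$) and $\Omega_{\alpha,4}=\Omega_{\beta,1}$ (as $z\in G_\beta$) pointwise, and each $\omega\in\Omega_{\alpha,2}=\Omega_{\beta,2}$ (as $z\in Z=G_\alpha\cap G_\beta\cap G_\omega\le G_\omega$), hence all of $\Omega$ pointwise — impossible. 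So $Z=1$, $|G_\alpha|=4$, $|H|=16$, $|H:G_\alpha|=4=|G_\alpha|$, and $H=G_\alpha\times G_\beta$ is elementary abelian of order $16$. For normality, in this configuration every point stabilizer equals $G_\alpha$ (for $\gamma\in\Omega_{\alpha,1}$), or $G_\beta$ (for $\gamma\in\Omega_{\alpha,4}=\Omega_{\beta,1}$), or $(G_\alpha\cap G_\gamma)(G_\beta\cap G_\gamma)\le G_\alpha G_\beta$ (for $\gamma\in\Omega_{\alpha,2}=\Omega_{\beta,2}$, by Lemma~\ref{lemma:-4}), so $H=\langle G_\gamma:\gamma\in\Omega\rangle$, which is normal in $G$.

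I expect the only genuinely delicate point to be the exclusion of $x_4=2$: pinning down the precise vertex-transitive valency-$2$ graph and the precise subset to which Lemma~\ref{lemma:444esteso} applies (this is presumably why Lemmas~\ref{lemma:44esteso} and~\ref{lemma:444esteso} are prepared in advance). The rest — ruling out the cyclic quotient, the edge-count in the bipartite graph, the elementary-abelian statement, and the endgame once $x_4=1$ — is bookkeeping with the identities of Lemma~\ref{lemma:-4}, plus the elementary remark that a bipartite graph on parts of size $3$ with $\ge4$ edges satisfies property $(\ast)$.
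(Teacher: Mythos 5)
Your outline agrees with the paper up to and including part (i) (the inclusion--exclusion bound $|\Omega_{\alpha,2}\cap\Omega_{\beta,2}|\ge 2N-|\Omega|$, the count $|\Omega_{\alpha,2}\cap\Omega_{\beta,2}|=d\cdot e$ with $e\le 9$, and $4x_4\le 1+e$ are exactly the paper's computations), and your endgame under $x_4=1$ (showing $Z=G_\alpha\cap G_\beta$ fixes all of $\Omega$, hence $Z=1$, $|G_\alpha|=4$, and $G_\alpha\times G_\beta=\langle G_\gamma:\gamma\in\Omega\rangle\unlhd G$) is the paper's argument. The genuine gap is your treatment of $x_4=2$. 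You propose to \emph{exclude} it by applying Lemma~\ref{lemma:444esteso} to the set $W$ of blocks meeting $\Omega_{\alpha,1}\cup\Omega_{\alpha,2}$, but this $W$ does not satisfy the closure hypothesis of that lemma: by Lemma~\ref{lemma:-4}\eqref{eq:lemma-42}--\eqref{eq:lemma-43}, a point $\beta\in\Omega_{\alpha,4}$ lies in $\Omega_{\omega_1,2}\cap\Omega_{\omega_2,2}$ for points $\omega_1,\omega_2\in\Omega_{\alpha,2}$ with distinct stabilizers, so the block of $\beta$ belongs to $V\setminus(\Gamma(\delta_1)\cup\Gamma(\delta_2))$ without belonging to $W$. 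So no contradiction is available by this route, and in fact the paper does not exclude $x_4=2$ at all: it proves (ii)--(iv) in that case too, using $|\mathcal{S}_{\alpha,\beta}|\ge 7$ to get (ii) and (iii) (seven edges, so Lemma~\ref{lemma:44esteso} applies), and then Lemma~\ref{lemma:444esteso} with $W:=\{G_\omega\mid G_\alpha\cap G_\beta\le G_\omega\}$ --- whose closure property is exactly Lemma~\ref{lemma:-4}\eqref{eq:lemma-43} --- taking the horn $W=V$ (since $|V|\le 6$ is incompatible with $x_2\ge 9$) to conclude $G_\alpha\cap G_\beta$ lies in every point stabilizer and hence is trivial; a second application with $W:=\{G_\omega\mid G_\omega\le G_\alpha\times G_\beta\}$ gives normality. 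As it stands, your proof of (iv) covers only $x_4=1$.

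Two secondary problems. First, you invoke the argument of Lemma~\ref{lemma:44esteso} with only $e\ge 4$ edges and before (ii) is established: that lemma needs $X$ elementary abelian (this is what makes each product $X_iY_j$, being a conjugate of $X$, abelian), and with $4$ or $5$ edges the two vertices of valency at least $2$ may lie in different parts (degree pattern $3{+}1{+}0$ against $2{+}1{+}1$), in which case property $(\ast)$ as used in that proof does not yield $[X,Y]=1$. This bites precisely in the subcase $x_4=1$, where $e=x_2$ can be as small as $4$; there the paper obtains (iii) differently, via Lemma~\ref{lemma:4esteso} (${\bf N}_G(G_\alpha)={\bf N}_G(G_\beta)$, so $[G_\alpha,G_\beta]\le G_\alpha\cap G_\beta=1$), a lemma you never use. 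Second, your opening dichotomy ``$G_\alpha/Z\cong C_4$ or $C_2\times C_2$'' tacitly treats $Z$ as normal in $G_\alpha$ and overlooks the dihedral (and, a priori, $\mathrm{Alt}(4)$, $\mathrm{Sym}(4)$) possibilities for the group induced on $\beta^{G_\alpha}$; moreover, even when the $G_\alpha$-side has a unique intermediate subgroup, $G_\omega=(G_\alpha\cap G_\omega)(G_\beta\cap G_\omega)$ need not be independent of $\omega$ (only $G_\alpha\cap G_\omega$ is), so the correct bound is $|\Omega_{\alpha,2}\cap\Omega_{\beta,2}|\le 3d$, not $\le d$. This piece is repairable --- your own count $e\ge\frac45(1+x_2)\ge 4>3$, combined with the paper's observation that a non-elementary-abelian induced action on either side forces at most $3$ distinct stabilizers, gives that both induced groups are elementary abelian, which is what your proof of (ii) actually needs --- but as written the step is incorrect, and the $x_4=2$ issue above is not repairable in the form you propose.
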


\begin{proof}
From Lemma~\ref{lemma:-4}, $\Omega=\Omega_{\alpha,1}\cup\Omega_{\alpha,2}\cup \Omega_{\alpha,4}$. Moreover, for each $\beta\in \Omega_{\alpha,4}$, we have shown that $G_\alpha$ contains a proper subgroup (namely, $G_\alpha\cap G_\omega$, for each $\omega\in \Omega_{\alpha,2}\cap\Omega_{\beta,2}$) strictly containg $G_\alpha\cap G_\beta$. This implies that the permutation group, $P$ say, induced by $G_\alpha$ in its action on the suborbit $\beta^{G_\alpha}$ is a $2$-group. (Indeed,  if $G_\alpha$ induces the alternating group $\mathrm{Alt}(4)$ or the symmetric group  $\mathrm{Sym}(4)$ on $\beta^{G_\alpha}$, then $G_\alpha$ acts primitively on $\beta^{G_\alpha}$ and hence $G_\alpha\cap G_\beta$ is maximal in $G_\alpha$.) Clearly, this $2$-group $P$ must be either cyclic of order $4$, or elementary abelian of order $4$, or dihedral of order $8$. 

We have drawn in Figure~\ref{figureeye-2} the lattice of subgroups of the cyclic group of order $4$, the elementary abelian group of order $4$ and the dihedral group of order $8$: the dark colored nodes indicate the lattice of subgroups between the whole group and the stabilizer of a point. 

Figure~\ref{figureeye-2} shows that, given $G_\alpha$ and $G_{\alpha}\cap G_\beta$, we only have one choice for $G_\alpha\cap G_\omega$ when $P$ is cyclic of order $4$ or dihedral of order $8$, whereas we have at most three choices for $G_\alpha\cap G_\omega$ when $P$ is elementary abelian of order $4$.
\begin{figure}[!ht]
\begin{tikzpicture}[node distance=1cm,inner sep=0pt]
\node[minimum size=2mm,circle, fill=black] at (-1,0) (A0) {};
\node[minimum size=2mm,circle,fill=black,below of=A0] (A1) {};
\node[minimum size=2mm,circle,fill=black,below of=A1] (A2) {};
\node[minimum size=2mm,circle, fill=black] at (3,0) (B0) {};
\node[minimum size=2mm,circle,fill=black,below of=B0] (B1) {};
\node[minimum size=2mm,circle,fill=black,below of=B1] (B2) {};
\node[minimum size=2mm,circle,fill=black,left of=B1] (B3) {};
\node[minimum size=2mm,circle,fill=black,right of=B1] (B4) {};
\node[minimum size=2mm,circle, fill=black] at (7,0) (C0) {};
\node[minimum size=2mm,circle,fill=lightgray,below of=C0] (C1) {};
\node[minimum size=2mm,circle,fill=black,left of=C1] (C2) {};
\node[minimum size=2mm,circle,fill=lightgray,right of=C1] (C3) {};
\node[minimum size=2mm,circle,fill=lightgray,below of=C1] (C4) {};
\node[minimum size=2mm,circle,fill=black,left of=C4] (C5) {};
\node[minimum size=2mm,circle,fill=lightgray,left of=C5] (C6) {};
\node[minimum size=2mm,circle,fill=lightgray,right of=C4] (C7) {};
\node[minimum size=2mm,circle,fill=lightgray,right of=C7] (C8) {};
\node[minimum size=2mm,circle,fill=lightgray,below of=C4] (C9) {};

\draw[-] (A0) -- (A1);
\draw[-] (A1) -- (A2);
\draw[-] (B0) -- (B1);
\draw[-] (B0) -- (B3);
\draw[-] (B0) -- (B4);
\draw[-] (B2) -- (B1);
\draw[-] (B2) -- (B3);
\draw[-] (B2) -- (B4);

\draw[-] (C0) -- (C1);
\draw[-] (C0) -- (C2);
\draw[-] (C0) -- (C3);
\draw[-] (C4) -- (C1);
\draw[-] (C4) -- (C2);
\draw[-] (C4) -- (C3);
\draw[-] (C9) -- (C5);
\draw[-] (C9) -- (C6);
\draw[-] (C9) -- (C7);
\draw[-] (C9) -- (C8);
\draw[-] (C9) -- (C4);
\draw[-] (C2) -- (C5);
\draw[-] (C2) -- (C6);
\draw[-] (C3) -- (C7);
\draw[-] (C3) -- (C8);
\end{tikzpicture}
\caption{}\label{figureeye-2}
\end{figure}


Given $\beta\in \Omega_{\alpha,4}$, let $$\mathcal{S}_{\alpha,\beta}:=\{G_\omega\mid \omega\in \Omega_{\alpha,2}\cap \Omega_{\beta,2}\}.$$
Observe that in the set $\mathcal{S}_{\alpha,\beta}$ we are collecting point stabilizers and not elements of $\Omega$ and hence different elements $\omega_1,\omega_2$ of $\Omega$ can give rise to the same element of $\mathcal{S}_{\alpha,\beta}$ when $G_{\omega_1}=G_{\omega_2}$.

We claim that 
\begin{equation}\label{eye:6}
|\mathcal{S}_{\alpha,\beta}|\le 
\begin{cases}
3&\textrm{when the permutation group induced by $G_\alpha$ on $\beta^{G_\alpha}$ or by $G_\beta$ on $\alpha^{G_\beta}$}\\
&\textrm{ is not an elementary abelian $2$-group of order $4$},\\
9&\textrm{otherwise}.
\end{cases}
\end{equation}
This claim follows from the paragraphs above and from Figure~\ref{figureeye-2}. Indeed, from Lemma~\ref{lemma:-4} part~\eqref{eq:lemma-43}, for each $X\in \mathcal{S}_{\alpha,\beta}$, there exists a proper subgroup $A$ of $G_\alpha$ and a proper subgroup $B$ of $G_\beta$ with $G_\alpha\cap G_\beta<A$, $G_\alpha\cap G_\beta<B$ and $X=AB$. Observe that we have at most $3$ choices for $A$ and at most $3$ choices for $B$ and hence at most $9$ choices for $X$. Moreover, as long as the permutation group induced on the corresponding orbit is not elementary abelian, we actually have only one choice for either $A$ or $B$ yielding at most $3$ choices for $X$.
\smallskip

For each $X\in\mathcal{S}_{\alpha,\beta}$, let $\mathcal{S}_X:=\{\omega\in \Omega_{\alpha,2}\cap\Omega_{\beta,2}\mid G_\omega=X\}$. From Section~\ref{notation} and from the notation therein, we have $|\mathcal{S}_X|=|\Omega_{\omega,1}|=d$. From this and from the definition of $\mathcal{S}_{\alpha,\beta}$, we obtain
\begin{equation}\label{eye:11}
|\Omega_{\alpha,2}\cap\Omega_{\beta,2}|=\left|\bigcup_{X\in\mathcal{S}_{\alpha,\beta}}\mathcal{S}_X\right|= \sum_{X\in\mathcal{S}_{\alpha,\beta}}|\mathcal{S}_X|= |\mathcal{S}_{\alpha,\beta}|d.
\end{equation}

From part~\eqref{eq:lemma-41} of Lemma~\ref{lemma:-4}, we have $\Omega=\Omega_{\alpha,1}\cup\Omega_{\alpha,2}\cup\Omega_{\alpha,4}$. From this, we immediately  get $\Omega_{\beta,2}\subseteq \Omega\setminus\Omega_{\alpha,1}=\Omega_{\alpha,2}\cup\Omega_{\alpha,4}$ and hence
 $\Omega_{\alpha,2}\cup\Omega_{\beta,2}\subseteq \Omega_{\alpha,2}\cup\Omega_{\alpha,4}$. Therefore,
\begin{align}\label{eye:10}
|\Omega_{\alpha,2}\cap\Omega_{\beta,2}|&=|\Omega_{\alpha,2}|+|\Omega_{\beta,2}|-|\Omega_{\alpha,2}\cup\Omega_{\beta,2}|\\\nonumber
&\ge |\Omega_{\alpha,2}|+|\Omega_{\beta,2}|-|\Omega_{\alpha,2}\cup\Omega_{\alpha,4}|\\\nonumber
&= |\Omega_{\alpha,2}|+|\Omega_{\beta,2}|-|\Omega_{\alpha,2}|-|\Omega_{\alpha,4}|\\\nonumber
&=|\Omega_{\alpha,2}|-|\Omega_{\alpha,4}|.
\end{align}
Now, dividing both sides of~\eqref{eye:11} and~\eqref{eye:10} by $|\Omega_{\alpha,1}|=d$, by recalling~\eqref{def:xi} and by rearranging the terms, we obtain 
\begin{equation}\label{eye:30}
x_2\le |\mathcal{S}_{\alpha,\beta}|+x_4.
\end{equation}

\smallskip

We now suppose that part~\eqref{lemma:-30} does not hold  and we show that part~\eqref{lemma:-322},~\eqref{lemma:-32},~\eqref{lemma:-33} and~\eqref{lemma:-34} are satisfied. In particular, we work under the assumption that
$$|\Omega_{\alpha,1}|+|\Omega_{\alpha,2}|\ge \frac{5|\Omega|}{6}.$$
As $|\Omega|=d(x_1+x_2+x_4)$, $|\Omega_{\alpha,1}|+|\Omega_{\alpha,2}|=d(x_1+x_2)$ and $x_1=1$, the inequality $|\Omega_{\alpha,1}|+|\Omega_{\alpha,2}|\ge 5|\Omega|/6$ gives
$$5x_4\le 1+x_2.$$
Now,~\eqref{eye:30} yields $5x_4\le 1+x_2\le 1+|\mathcal{S}_{\alpha,\beta}|+x_4$, that is, $4x_4\le 1+|\mathcal{S}_{\alpha,\beta}|$. From~\eqref{eye:6}, we deduce that $x_4\le 2$.  This already shows part~\eqref{lemma:-322}. 

When $x_4=2$, we deduce $|\mathcal{S}_{\alpha,\beta}|\ge 7$ and hence~\eqref{eye:6} yields that the permutation groups induced by $G_\alpha$ on $\beta^{G_\alpha}$ and by $G_\beta$ on $\alpha^{G_\beta}$ are both elementary abelian $2$-groups of order $4$. Since this argument does not depend upon $\beta\in\Omega_{\alpha,4}$, we have shown that $G_\alpha$ acts as an elementary abelian group on each of its orbits of cadinality $4$. Since all other orbits of $G_\alpha$ have cardinality $1$ or $2$, we deduce that $G_\alpha$ acts as an elementary abelian $2$-group on each of its orbits and hence $G_\alpha$ is an elementary abelian $2$-group.  This shows part~\eqref{lemma:-32}, under the additional assumption that $x_4=2$. Moreover, as $|\mathcal{S}_{\alpha,\beta}|\ge 7$, Lemma~\ref{lemma:44esteso} applied with $X:=G_\alpha$ and $Y:=G_\beta$ gives that $G_\alpha$ and $G_\beta$ commute with each other. This shows that part~\eqref{lemma:-33} is satisfied. To prove part~\eqref{lemma:-34} we use Lemma~\ref{lemma:444esteso}. Let $\Gamma$ be the graph having vertex set $V$, the set of conjugates of $G_\alpha$ in $G$, that is, $$V:=\{G_\omega\mid\omega\in \Omega\}.$$ Then $|V|=1+x_2+x_4$. We declare two vertices $G_{\omega_1}$ and $G_{\omega_2}$ of $\Gamma$ adjacent if $G_{\omega_1}\cap G_{\omega_2}$ has index $4$ in $G_{\omega_1}$ (and hence also in $G_{\omega_2}$). Clearly, the action of $G$ by conjugation gives rise to a vertex-transitive action of $G$ on $\Gamma$. As $x_4=2$, $\Gamma$ has valency $2$. Let $W$ be the collection of all vertices $G_\omega$ of $\Gamma$ with $G_\alpha \cap G_\beta\le G_\omega$. Clearly, $G_\alpha,G_\beta\in W$ and, from  Lemma~\ref{lemma:-4} part~\eqref{eq:lemma-43}, for any two distinct vertices $G_{\delta_1}$ and $G_{\delta_2}$ of $\Gamma$ contained in $W$, we have that $$\Omega_{\delta_1,2}\cap\Omega_{\delta_2,2}=V\setminus (\Gamma(G_{\delta_1})\cup\Gamma(G_{\delta_2}))\subseteq W.$$ From this, Lemma~\ref{lemma:444esteso} gives that either $W=V$ or $|V|\le 6$. The second alternative gives $x_2=|V|-1-x_4\le 3$, which contradicts the fact that $5x_4\le 1+x_2$. Therefore, $W=V$ and hence $G_\alpha\cap G_\beta\le G_\omega$, for every $\omega\in \Omega$. Thus $G_\alpha\cap G_\beta=1$ and hence $G_\alpha G_\beta=G_\alpha\times G_\beta$ is an elementary abelian $2$-group of order $16$. To prove that $G_\alpha\times G_\beta\unlhd G$ it suffices to apply again this argument to the collection $W$ of all vertices $G_\omega$ of $\Gamma$ with $G_\omega\le G_\alpha \times G_\beta$.

In particular, in the rest of the proof we work under the assumption $x_4=1$.

When $x_4=1$, we may refine some of the inequalities above. Indeed, when $x_4=1$, we have $\Omega_{\alpha,4}=\Omega_{\beta,1}$, because both sets have the same cardinality and $\Omega_{\beta,1}\subseteq \Omega_{\alpha,4}$. From this it follows $\Omega_{\alpha,2}=\Omega_{\beta,2}$. Therefore, from~\eqref{eye:11}, we get
$$dx_2=|\Omega_{\alpha,2}|=|\Omega_{\alpha,2}\cap\Omega_{\beta,2}|=d|\mathcal{S}_{\alpha,\beta}|.$$
Now, the inequality $5=5x_4\le 1+x_2$ implies $|\mathcal{S}_{\alpha,\beta}|=x_2\ge 4$. Again, we may use~\eqref{eye:6} to deduce that the permutation groups induced by $G_\alpha$ on $\beta^{G_\alpha}$ and by $G_\beta$ on $\alpha^{G_\beta}$ are both elementary abelian $2$-groups of order $4$. 
This, as above, yields that $G_\alpha$ is an elementary abelian $2$-group, that is, part~\eqref{lemma:-32} holds.

From Lemma~\ref{lemma:-4} part~\eqref{eq:lemma-43}, $G_\alpha\cap G_\beta\le G_\omega$, for every $\omega\in \Omega_{\alpha,2}\cap \Omega_{\beta,2}$. In particular, $G_\alpha\cap G_\beta$ fixes pointwise $\Omega_{\alpha,2}\cap\Omega_{\beta,2}$. As $\Omega_{\alpha,2}\cap \Omega_{\beta,2}=\Omega_{\alpha,2}$, we deduce that $G_\alpha\cap G_\beta$ fixes pointwise $\Omega_{\alpha,2}$. Since $G_\alpha\cap G_\beta$ fixes pointwise also $\Omega_{\alpha,1}$ and $\Omega_{\beta,1}=\Omega_{\alpha,4}$, we obtain that $G_\alpha\cap G_\beta$ fixes pointwise $\Omega_{\alpha,1}\cup\Omega_{\alpha,2}\cup\Omega_{\alpha,4}=\Omega$. Thus $G_\alpha\cap G_\beta=1$ and $|G_\alpha|=4$. Observe also that when $x_4=1$, the hypothesis of Lemma~\ref{lemma:4esteso} are satisfied and hence ${\bf N}_G(G_\alpha)={\bf N}_G(G_\beta)$. Therefore $G_\beta$ normalizes $G_\alpha$. This gives that the commutator subgroup $[G_\alpha,G_\beta]$ lies in $G_\alpha\cap G_\beta=1$, that is, $G_\alpha$ commutes with $G_\beta$. This shows that part~\eqref{lemma:-33} is satisfied. Now, as $\Omega_{\alpha,2}=\Omega_{\beta,2}$,  Lemma~\ref{lemma:-4} part~\eqref{eq:lemma-43} yields $G_\omega\le G_\alpha\times G_\beta$, for every $\omega\in \Omega_{\alpha,2}$. Therefore, $G_{\alpha}\times G_\beta$ contains $G_\omega$, for every $\omega\in \Omega$. Thus $$G_\alpha\times G_\beta=\langle G_\omega\mid\omega\in \Omega\rangle\unlhd G$$
and $G_\alpha\times G_\beta$ has order $16$. Thus part~\eqref{lemma:-34} is satisfied.
\end{proof}

We need one final preliminary lemma, with a somehow different flavour. We denote by $C_2$ and $C_4$ the cyclic groups of order $2$ and $4$, respectively, we denote by $Q_8$ the quaternion group of order $8$ and we denote by $D_8$ the dihedral group of order $4$.
\begin{lemma}\label{lemma:diff}
Let $R$ be a finite group, let $U$ be a proper subgroup of $R$ and let $r\in U$ be a central involution of $R$. Let $\tau:R\to R$ be the permutation defined by
$$
x\mapsto x^\tau:=
\begin{cases}
x&\textrm{when }x\in U,\\
xr&\textrm{when }x\in R\setminus U.
\end{cases}
$$
Then one of the following holds
\begin{enumerate}[(a)]
\item\label{eq:diff1}the number of inverse-closed subsets $S$ of $R$ with $S^\tau=S$ is at most $2^{{\bf c}(R)-\frac{|R|}{48}}$,
\item\label{eq:diff2}$R$ is generalized dicyclic,
\item\label{eq:diff22}$R\cong C_4\times C_2^\ell$, for some non-negative integer $\ell$.
\end{enumerate}
\end{lemma}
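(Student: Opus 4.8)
The plan is to count the inverse-closed subsets $S$ of $R$ that are $\tau$-invariant and to show this number is small unless $R$ has a very restricted structure. The key observation is that the pairing that organizes inverse-closed subsets into ``free choices'' is the orbit structure of the map $x\mapsto x^{-1}$ on $R$: elements of order at most $2$ are fixed, and the remaining elements come in pairs $\{x,x^{-1}\}$. An inverse-closed set $S$ is determined by which involutions and which pairs it contains, giving ${\bf c}(R)=(|R|+|{\bf I}(R)|)/2$ binary choices. The constraint $S^\tau=S$ couples certain of these choices: since $\tau$ multiplies $R\setminus U$ by the central involution $r$, it is itself an involution on $R$ commuting with inversion (because $r$ is central, $(xr)^{-1}=x^{-1}r$, so $\tau$ maps inverse pairs to inverse pairs and involutions to involutions, though possibly moving an involution out of ${\bf I}(R)$ only if $xr$ fails to be an involution — one must check $x\in U$ involution forces $x^\tau=x$, and $x\notin U$ of order $2$ gives $x^\tau=xr$, which has order $2$ iff $x$ and $r$ commute, which they do). So $\tau$ acts on the set of $\sim$-classes (where $\sim$ is the inversion pairing), and the $\tau$-invariant inverse-closed sets correspond to subsets of these classes that are unions of $\langle\tau\rangle$-orbits. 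The number of such is $2^{t}$ where $t$ is the number of $\langle\tau\rangle$-orbits on the $\sim$-classes, so the loss relative to ${\bf c}(R)$ is exactly the number of $\sim$-classes on which $\tau$ acts non-trivially, divided appropriately; concretely, each non-trivial $\tau$-orbit on classes merges two free choices into one, so the exponent drops by (number of non-trivial $\tau$-orbits on $\sim$-classes).

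The heart of the argument is therefore a counting estimate: I want to show that $\tau$ moves ``many'' $\sim$-classes, i.e. that for many $x$, the class of $x$ differs from the class of $x^\tau$. If $x\notin U$ then $x^\tau=xr$, and the class of $xr$ equals the class of $x$ only if $xr\in\{x,x^{-1}\}$, i.e. $r=1$ (excluded) or $xr=x^{-1}$, i.e. $x^{-1}x^{-1}=r$, i.e. $x^2=r$. So on $R\setminus U$, the only $\sim$-classes fixed by $\tau$ are those consisting of elements with $x^2=r$ together with their inverses; call this ``bad'' set $B=\{x\in R\setminus U: x^2=r\}$. Outside $U\cup B$, the map $\tau$ genuinely fuses pairs of distinct $\sim$-classes, and each such fusion costs one in the exponent. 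A careful bookkeeping — using $|R\setminus U|\ge |R|/2$ since $U$ is proper, and estimating $|B|$ — should yield a loss of at least $|R|/48$ unless $B$ is large. The main obstacle, and the point where the three exceptional cases enter, is controlling $|B|=|\{x\in R\setminus U: x^2=r\}|$: if every element outside $U$ squares to the fixed central involution $r$, then $R$ is forced into one of the listed shapes. Indeed, $x^2=r$ for all $x\notin U$ is exactly the defining relation pattern of a generalized dicyclic group (with $A$ of index $2$ and $y=r$), or, when the elements outside $U$ are more degenerate, of $C_4\times C_2^\ell$; one analyzes the coset $R\setminus U$ as $Ux$ for a single $x$ with $x^2=r$, deduces $u x$ has square $r$ for all $u\in U$, which pins down conjugation of $x$ on $U$ and the structure of $U$ (it must be abelian, and either of exponent $2$ — giving the dicyclic case when combined appropriately — or $U\cong C_4\times C_2^{\ell'}$, giving case~\eqref{eq:diff22}).

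Concretely I would proceed as follows. First, verify $\tau$ is a well-defined involution on $R$ commuting with inversion, so it induces an involution $\bar\tau$ on the set $\mathcal{C}$ of $\sim$-classes; deduce that the number of $\tau$-invariant inverse-closed sets is $2^{|\mathcal{C}/\langle\bar\tau\rangle|}$ and that $|\mathcal{C}/\langle\bar\tau\rangle|={\bf c}(R)-f$ where $f$ is the number of length-$2$ orbits of $\bar\tau$ on $\mathcal{C}$. Second, compute the fixed classes of $\bar\tau$: a class through $x\in U$ is fixed; a class through $x\notin U$ is fixed iff $x^2=r$ (handling the subtlety that the class may be a singleton involution or a doubleton $\{x,x^{-1}\}$). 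Third, bound $f$ from below: the classes moved by $\bar\tau$ all lie in $(R\setminus U)\setminus B$, they are permuted in pairs, and a class has size $1$ or $2$, so $f\ge \tfrac14(|R\setminus U|-|B|)\ge \tfrac14(|R|/2-|B|)=|R|/8-|B|/4$; hence if $|B|\le |R|/12$ we get $f\ge |R|/8-|R|/48=|R|/48-\ldots$ (adjust the constant; the paper's $1/48$ leaves room). Fourth and last — the hard step — suppose $|B|>|R|/12$; show this forces $x^2=r$ for ``structurally all'' $x\notin U$, then run the generalized-dicyclic/$C_4\times C_2^\ell$ recognition: pick $x_0\in B$, note $R=U\cup Ux_0$, for $u\in U$ the element $ux_0\in B$ gives $(ux_0)^2=r$, expand using $x_0^2=r$ to get $u\cdot u^{x_0}=1$, i.e. $u^{x_0}=u^{-1}$ for all $u\in U$; so $x_0$ inverts $U$, $U$ is abelian, $r=x_0^2\in U$ is an involution, and the presentation is exactly $\langle U,x_0\mid x_0^2=r,\ u^{x_0}=u^{-1}\rangle$. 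If $U$ has exponent $>2$ this is $\Dic(U,r,x_0)$, case~\eqref{eq:diff2}; if $U$ has exponent $2$ then inversion is trivial on $U$, $R$ is abelian, generated by $U\cong C_2^k$ and $x_0$ with $x_0^2=r\in U$, so $R\cong C_4\times C_2^{k-1}$ if $r\ne 1$... but $r\ne 1$ always, giving case~\eqref{eq:diff22}. The only real work is making the ``structurally all'' reduction rigorous when $|B|>|R|/12$ but $B\ne R\setminus U$; I expect one argues that a single element of $B$ already forces the conclusion via the coset computation above, so in fact $B\ne\emptyset$ suffices to land in \eqref{eq:diff2} or \eqref{eq:diff22} — meaning the dichotomy is really ``$B=\emptyset$, whence $f\ge|R|/8$ and \eqref{eq:diff1} holds with room to spare'' versus ``$B\ne\emptyset$, whence the recognition applies.'' That observation, if correct, simplifies the estimate considerably and is the cleanest route; verifying it — in particular that $x_0\in B$ really does force $u^{x_0}=u^{-1}$ for \emph{every} $u\in U$ and not just a coset-representative's worth — is the main thing to check carefully.
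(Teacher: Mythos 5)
Your counting skeleton is sound and is essentially the same as the paper's: the paper runs the orbit-counting lemma on $T=\langle\iota,\tau\rangle$ and its key quantity is exactly your bad set, $\mathcal{S}=\{x\in R\setminus U\mid x^2=r\}$, with the main inequality $\kappa\le {\bf c}(R)-\tfrac14\bigl(|R\setminus U|-|\mathcal{S}|\bigr)$, which matches your bound $f\ge\tfrac14\bigl(|R\setminus U|-|B|\bigr)$. The genuine gap is in the structural half. Your recognition argument ("pick $x_0\in B$, note $R=U\cup Ux_0$, deduce $u^{x_0}=u^{-1}$ for all $u\in U$") needs two hypotheses you do not have: that $|R:U|=2$, and that the \emph{entire} coset $Ux_0$ lies in $B$ (since $(ux_0)^2=u\,u^{x_0}r$ equals $r$ only if $ux_0\in B$). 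In particular your proposed shortcut "$B\ne\emptyset$ forces (b) or (c)" is false: take $R=C_8=\langle g\rangle$, $r=g^4$, $U=\langle r\rangle$; then $g^2\in B$ but $R$ is neither generalized dicyclic nor of the form $C_4\times C_2^\ell$ (the lemma holds there via conclusion (a)). The threshold version "$|B|>|R|/12$ forces (b) or (c)" is also false: for central products such as $D_8\circ D_8$ (or $Q_8\circ D_8$) times $C_2^\ell$, with $U$ a proper subgroup containing the central involution $r$, the set $\{x\mid x^2=r\}$ has size on the order of $|R|/2$, so $|B|$ can far exceed $|R|/12$, yet these groups are in neither exceptional family; moreover when $U$ is small, $|B|$ can even exceed the range where your estimate $f\ge\tfrac14(|R\setminus U|-|B|)$ still yields $|R|/48$, so your argument then proves neither (a) nor (b)/(c).

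What the paper actually does in the regime where $\mathcal{S}$ is large is considerably more delicate, and this is the content your proposal is missing. For $|R:U|=2$ with $\mathcal{S}\subsetneq Ux$, it invokes the Liebeck--MacHale theorem (an automorphism inverting more than $3/4$ of the elements) to get $|\mathcal{S}|\le 3|U|/4$ and hence conclusion (a). For $|R:U|\ge 3$ with $|\mathcal{S}|>3|R|/4-|U|/2$, it first shows $U$ is central of exponent $2$ by an intersection trick ($|\mathcal{S}\cap u\mathcal{S}|>|R|/2$ forces $u^x=u^{-1}$ for two elements $x,y$ with $xy^{-1}\in{\bf C}_R(u)$, so $u$ is central), then passes to $\bar R=R/\langle r\rangle$, applies Miller's theorem when $\bar R$ is not elementary abelian, and otherwise classifies $R$ via the quadratic form $q(\bar x)=x^2$: besides $C_4\times C_2^\ell$ this produces central products of $D_8$'s (possibly with one $Q_8$ or $C_4$) times $C_2^\ell$, which are \emph{not} in cases (b)/(c) and for which one must compute $|\mathcal{S}|$ explicitly and check that the counting bound (a) still goes through. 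So the three-way conclusion of the lemma is not reached by a clean dichotomy "$B$ empty versus structure"; repairing your proof would require adding essentially all of this case analysis.
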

\begin{proof}
Let $\iota:R\to R$ be the permutation defined by $x^\iota=x^{-1}$, for every $x\in R$, and let $T:=\langle\iota,\tau\rangle$. Observe that a subset $S$ of $R$ is inverse-closed and $\tau$-invariant if and only if $S$ is $T$-invariant. In particular, the number of inverse-closed subsets $S$ of $R$ with $S^\tau=S$ is $2^\kappa$, where $\kappa$ is the number of orbits of $T$ on $R$. To compute $\kappa$ we use the orbit-counting lemma, which says that
\begin{equation}\label{ocl}
\kappa=\frac{1}{|T|}\sum_{t\in T}|\mathrm{Fix}_R(t)|.
\end{equation}

Observe that
\begin{align}\label{anuw}
\mathrm{Fix}_R(1)&:=R,\\\nonumber
\mathrm{Fix}_R(\iota)&:={\bf I}(R),\\\nonumber
\mathrm{Fix}_R(\tau)&:=U,\\\nonumber
\mathrm{Fix}_R(\iota\tau)&:={\bf I}(U)\cup\{x\in R\setminus U\mid x^2=r\}.\nonumber
\end{align}

Observe that $\iota\tau=\tau\iota$ and $\iota^2=\tau^2=1$. Therefore $T$ is an elementary abelian $2$-group of order at most $4$.

Observe that $\tau\ne 1$, because $U$ is a proper subgroup of $R$ and $r\ne 1$. If $\iota=1$, then $R$ is an elementary abelian $2$-group and $T=\langle \tau\rangle$. Thus~\eqref{ocl} and~\eqref{anuw} yield
\begin{align*}
\kappa&=\frac{1}{2}\left(|R|+|U|\right)\le\frac{|R|}{2}+\frac{|R|}{4}= \frac{3|R|}{4}\\
&=|R|-\frac{|R|}{4}={\bf c}(R)-\frac{|R|}{4}.
\end{align*}
Therefore, part~\eqref{eq:diff1} holds and the proof follows in this case. Suppose now $\iota=\tau$. This means that $U$ is an elementary abelian $2$-subgroup of $R$ and $x^{-1}=xr$, for every $x\in R\setminus U$. In other words, all elements in $U$ square to $1$ and all elements in $R\setminus U$ square to $r$. Let $\bar{R}:=R/\langle r\rangle$ and let us use the ``bar'' notation for the subgroups and for the elements of $\bar{R}$. Consider the function
$$( \cdot,\cdot):\bar{R}\times\bar{R}\to \langle r\rangle$$
defined by $( x\langle r\rangle, y\langle r\rangle)=x^{-1}y^{-1}xy$, for every $x,y\in R$. Similarly, consider the function
$$q:\bar{R}\to\langle r\rangle$$
defined by $q(x\langle r\rangle)=x^2$. It is not hard to see that, regarding $\bar{R}$ as a vector space over the field with $2$ elements, $(\cdot,\cdot)$ is a bilinear form and $q$ is a quadratic form polarizing to $(\cdot,\cdot)$, that is, $$q(\bar{x}\bar{y})q(\bar{x})q(\bar{y})=(\bar{x},\bar{y}),$$
for every $\bar{x},\bar{y}\in \bar{R}$.
Using this terminology, we have that each element of $\bar{U}$ is totally singular and each element of $\bar{R}\setminus\bar{U}$ is non-degenerate. From the classification of the quadratic forms over finite fields, we have $|\bar{R}:\bar{U}|\in \{2,4\}$. When $|\bar{R}:\bar{U}|=2$, we deduce that 
$R$ is an abelian group isomorphic to the direct product $C_4\times C_2^\ell$, for some $\ell\ge0$. In particular, part~\eqref{eq:diff22} holds. When $|\bar{R}:\bar{U}|=4$, we deduce that 
$R\cong Q_8\times C_2^\ell$, for some $\ell\ge0$. In particular, $R$ is generalized dicyclic and part~\eqref{eq:diff2} holds. For the rest our our argument, we may suppose that 
$\tau\ne\iota\ne1$.

The paragraph above can be summarized by saying that $T=\langle\iota,\tau\rangle$ has order $4$ and hence, from~\eqref{anuw},~\eqref{ocl} becomes
\begin{align}\label{ocl1}
\kappa&=
\frac{1}{4}
\left(
|\mathrm{Fix}_R(1)||+
|\mathrm{Fix}_R(\iota)|+
|\mathrm{Fix}_R(\tau)|+
|\mathrm{Fix}_R(\tau\iota)|
\right)\\\nonumber
&=\frac{1}{4}\left(
|R|+|{\bf I}(R)|+|U|+|{\bf I}(U)|+|\{x\in R\setminus U\mid x^2=r\}|\right)\\\nonumber
&\le 
\frac{1}{4}\left(
|R|+|{\bf I}(R)|+|U|+|{\bf I}(R)|+|\{x\in R\setminus U\mid x^2=r\}|\right)\\\nonumber
&=\frac{|R|+|{\bf I}(R)|}{2}-\left(
\frac{|R|}{4}-\frac{|U|}{4}-\frac{|\{x\in R\setminus U\mid x^2=r\}|}{4}
\right)\\\nonumber
&={\bf c}(R)-\left(
\frac{|R|}{4}-\frac{|U|}{4}-\frac{|\{x\in R\setminus U\mid x^2=r\}|}{4}
\right).\nonumber
\end{align}
Set $\mathcal{S}:=\{x\in R\setminus U\mid x^2=r\}$.

If $\mathcal{S}=\emptyset$, then the proof follows immediately from~\eqref{ocl1}, indeed, part~\eqref{eq:diff1} holds true. Therefore, for the rest of the proof we suppose $$\mathcal{S}\ne\emptyset.$$

To conclude we divide the proof in various cases. 

Suppose first that $|R:U|=2$. Let $x\in \mathcal{S}$ and observe that $R=U\cup Ux$. Now, a computation yields
$$\mathcal{S}=\{ux\mid u\in U, u^x=u^{-1}\}.$$
When $\mathcal{S}=Ux$, the action of $x$ on $U$  by conjugation is an automorphism of $U$ inverting each element of $U$. Therefore $U$ is abelian and $R$ is generalized dicyclic. Hence part~\eqref{eq:diff2} holds. When $\mathcal{S}\subsetneq Ux$, the result of Liebeck and MacHale~\cite{LieMac} shows that the automorphism $x$ can invert at most $3/4$ of the elements of $U$ and hence $|\mathcal{S}|\le 3|U|/4=3|R|/8$. Now,~\eqref{ocl1} gives $\kappa\le {\bf c}(R)-|R|/32$; hence part~\eqref{eq:diff1} holds and the proof follows. Therefore, for the rest of the proof we may suppose
\begin{equation}\label{bound}|R:U|\ge3.
\end{equation}

When $|\mathcal{S}|\le 3|R|/4-|U|/2$, from~\eqref{ocl1} and~\eqref{bound}, we deduce 
\begin{align*}
\kappa&\le {\bf c}(R)-\left(
\frac{|R|}{4}-\frac{|U|}{4}-\frac{3|R|}{16}+\frac{|U|}{8}\right)\\
&= {\bf c}(R)-\left(
\frac{|R|}{16}-\frac{|U|}{8}\right)\\
&\le{\bf c}(R)-\left(
\frac{|R|}{16}-\frac{|R|}{24}\right)={\bf c}(R)-
\frac{|R|}{48}
\end{align*} and the proof follows. Therefore, for the rest of the proof, we suppose $$|\mathcal{S}|> 3|R|/4-|U|/2.$$

Let $u$ be an arbitrary element of $U$. Then $u\mathcal{S}\subseteq R\setminus U$ and hence $\mathcal{S}\cup u\mathcal{S}\subseteq R\setminus U$. Therefore
\begin{align}\label{ocl2}
|\mathcal{S}\cap u\mathcal{S}|&=
|\mathcal{S}|+|u\mathcal{S}|-|\mathcal{S}\cup u\mathcal{S}|=2|\mathcal{S}|-|\mathcal{S}\cup u\mathcal{S}|\\\nonumber
&\ge 2|\mathcal{S}|-(|R|-|U|)>\frac{3|R|}{2}-|U|-(|R|-|U|)=\frac{|R|}{2}.\nonumber
\end{align}
Now, let $ux\in\mathcal{S}\cap u\mathcal{S}$. Then $x\in\mathcal{S}$ and hence 
$$r=(ux)^2=uxux=uu^xx^2=uu^xr.$$
Therefore $u^x=u^{-1}$. Now, repeating the argument above with $y\in \mathcal{S}\cap u\mathcal{S}$, we deduce $u^y=u^{-1}$ and hence $xy^{-1}\in {\bf C}_R(u)$. Since we have $|\mathcal{S}\cap u\mathcal{S}|$ choices for $y$,~\eqref{ocl2} implies $|{\bf C}_R(u)|>|R|/2$ and hence $R={\bf C}_R(u)$. Since $u$ is an arbitrary element of $U$, we deduce that $U$ is a central subgroup of $R$.

Since $u^x=u^{-1}$, for every $u\in U$ and for every $ux\in \mathcal{S}\cap u\mathcal{S}$, and since $U$ is contained in the center of $R$, we deduce that $U$ has exponent $2$. Since $U$ is a central subgroup of $R$ of exponent $2$, we now have an easier description of for $\mathcal{S}$, that is,
$$\mathcal{S}=\{x\in R\mid x^2=r\}.$$

Now that we know that $U$ has exponent $2$, we consider the quotient group $\bar{R}:=R/\langle r\rangle$. Observe that each element of $\bar U$ is an involution. Assume that $\bar{R}$ is not an elementary abelian $2$-group. Then, the theorem of Miller~\cite{Miller} yields $|{\bf I}(\bar{R})|\le 3|\bar{R}|/4$. In particular, the number of involutions in $\bar{R}\setminus \bar{U}$ is at most $3|\bar{R}|/4-|\bar{U}|$. Since each element in $\bar{\mathcal{S}}$ is an involution and since $\bar{\mathcal{S}}\subseteq \bar{R}\setminus \bar{U}$, we deduce $|\mathcal{S}|\le 3|R|/4-|U|$. Using this inequality  in~\eqref{ocl1}, we get
$$\kappa\le {\bf c}(R)-\frac{|R|}{16},$$
part~\eqref{eq:diff1} holds
and the proof follows in this case. It remains to consider the case that $\bar{R}$ is an elementary abelian $2$-group.

For this remaining case, we consider the bilinear form
$$( \cdot,\cdot):\bar{R}\times\bar{R}\to \langle r\rangle$$
defined by $( x\langle r\rangle, y\langle r\rangle)=x^{-1}y^{-1}xy$, for every $x,y\in R$, and its quadratic form
$$q:\bar{R}\to\langle r\rangle$$
defined by $q(x\langle r\rangle)=x^2$.  Again,  we use the classification of the quadratic forms over finite fields. Using this terminology, $\bar{U}$ is totally isotropic and contained in the kernel of the the bilinear form $(\cdot,\cdot)$ and the elements of $\bar{\mathcal{S}}$ are non-degenerate. Using this information we obtain that $R$ is isomorphic to one of the following groups
\begin{itemize}
\item $C_4\times C_2^\ell$, for some $\ell\ge 0$,
\item $\underbrace{D_8\circ D_8\circ \cdots \circ D_8}_{t\textrm{ times}}\times C_2^\ell$, for some $\ell\ge 0$ and $t\ge 1$,
\item $Q_8\circ\underbrace{D_8\circ D_8\circ \cdots \circ D_8}_{(t-1)\textrm{ times}}\times C_2^\ell$, for some $\ell\ge 0$ and some $t\ge 1$,
\item $C_4\circ \underbrace{D_8\circ D_8\circ \cdots \circ D_8}_{t\textrm{ times}}\times C_2^\ell$, for some $\ell\ge 0$ and some $t\ge 1$.
\end{itemize}
In the first case, an explicit computation gives $|\mathcal{S}|=|R|/2$. Hence~\eqref{ocl1} gives
\begin{align*}
\kappa&\le{\bf c}(R)-\left(\frac{|R|}{4}-\frac{|U|}{4}-\frac{|R|}{8}\right)=
{\bf c}(R)-\left(\frac{|R|}{8}-\frac{|U|}{4}\right)\\
&\le{\bf c}(R)-\left(\frac{|R|}{8}-\frac{|R|}{16}\right)
={\bf c}(R)-\frac{|R|}{16}
\end{align*}
and part~\eqref{eq:diff1} holds. In the second case, an explicit computation gives $|\mathcal{S}|=(2^t-1)|R|/2^{t+1}\le |R|/2$. Therefore we may argue as in the previous case and we obtain that part~\eqref{eq:diff1} holds. In the third case, an explicit computation gives $|\mathcal{S}|=(2^t+1)|R|/2^{t+1}$. When $t=1$, $R\cong Q_8\times C_2^\ell$ is generalized dicyclic and hence part~\eqref{eq:diff2} hods. When $t\ge 2$, we have  $|\mathcal{S}|\le 5|R|/8$ and hence~\eqref{ocl1} gives
\begin{align*}
\kappa&\le{\bf c}(R)-\left(\frac{|R|}{4}-\frac{|U|}{4}-\frac{5|R|}{32}\right)=
{\bf c}(R)-\left(\frac{3|R|}{32}-\frac{|U|}{4}\right)\\
&\le{\bf c}(R)-\left(\frac{3|R|}{32}-\frac{|R|}{16}\right)
={\bf c}(R)-\frac{|R|}{32}.
\end{align*} 
Thus, we obtain that part~\eqref{eq:diff1} holds. In the forth (and last) case, an explicit computation gives $|\mathcal{S}|=|R|/2$. Therefore we may argue as in the first case and we obtain that part~\eqref{eq:diff1} holds.
\end{proof}

\section{Proof of Theorems~\ref{thrm:main1} and~\ref{thrm:main2}}
In this section, using Section~\ref{sec:lemmata} we prove both Theorems~\ref{thrm:main1} and~\ref{thrm:main2}. Thus, let $G$ be a finite transitive permutation group on $\Omega$ with $${\bf I}_\Omega(G)\ge \frac{5}{6}.$$
If ${\bf I}_\Omega(G)=1$, then there is nothing to prove and hence we may suppose that ${\bf I}_\Omega(G)<1$. Let $\alpha\in \Omega$. From Lemma~\ref{lemma:-4}, we have
$$\Omega=\Omega_{\alpha,1}\cup\Omega_{\alpha,2}\cup\Omega_{\alpha,4}.$$ 
Since ${\bf I}_\Omega(G)<1$, $\Omega_{\alpha,4}\ne\emptyset$. Let $\beta\in \Omega_{\alpha,4}$. From Lemma~\ref{lemma:-3},
$$V:=G_\alpha\times G_\beta$$
is an elementary abelian normal $2$-subgroup of $G$ of order $16$. Let $e_1,e_2,e_3,e_4$ be a basis of $V$, regarded as a vector space over the field with $2$ elements, and with $G_\alpha=\langle e_1,e_2\rangle$. Let $H:=G/{\bf C}_G(V)$ and $W:=G_\alpha$. Clearly, $H\le \mathrm{GL}(V)\cong\mathrm{GL}_4(2)$. Now, consider the action of $H$ on the $2$-dimensional subspaces of $V$ and consider $O:=\{W^h\mid h\in H\}$, the $H$-orbit containing $W$. Clearly,
$$\frac{|\Omega_{\alpha,1}\cup\Omega_{\alpha,2}|}{|\Omega|}=\frac{|\{U\in O\mid |W:W\cap U|\le 2\}|}{|O|}.$$
Observe that the right hand side of this equality can be easily computed with the help of a computer. With the computer algebra system~\texttt{magma}~\cite{magma}, we have computed all the subgroups of $\mathrm{GL}_4(2)$. Then, we have selected only the subgroups $H$ with the property that
$$V=\langle W^h\mid h\in H\rangle\hbox{   and   }\bigcap_{h\in H}W^h=0.$$
(This selection is due to the fact that $V=\langle G_\alpha^g\mid g\in G\rangle$ and that $G_\alpha$ is core-free in $G$.)
 Then, for each such subgroup $H$, we have computed the orbit $O=W^H$ and we have computed the ratio $\frac{|\{U\in O\mid |W:W\cap U|\le 2\}|}{|O|}$. We have checked that in all cases this ratio is at most $5/6$. In particular, Theorem~\ref{thrm:main1} is proved. Moreover, we have checked that this ratio is $5/6$ if and only if $H$ is given in the statement of Theorem~\ref{thrm:main2}. Since this construction can be reversed, we also obtain the converse implication for Theorem~\ref{thrm:main2}.

\section{Proof of Theorem~\ref{thrm:main3}}

Let $G$ be a finite transitive group properly containing a regular subgroup $R$. Since $R$ acts regularly, we may identify the domain of $G$ with $R$. Now, the number of Cayley graphs $\Cay(R,S)$ on $R$ with $G\le \mathrm{Aut}(\Cay(R,S))$ is the number of inverse-closed subsets $S$ of $R$ left invariant by $G_1$, where $G_1$ is the stabilizer of the point $1\in R$ in $G$. In particular, to prove Theorem~\ref{thrm:main3}, we need to estimate the number of inverse-closed subsets of $R$ that are union of $G_1$-orbits.

Suppose first that $${\bf I}_R(G)=1.$$ Since $R$ is properly contained in $G$, from the theorem of Bergman and Lenstra mentioned in Section~\ref{intro}, we have two cases to consider
\begin{itemize}
\item $|G_1|=2$,
\item $G$ contains an elementary abelian normal $2$-subgroup $N$ with $|N:G_1|=2$.
\end{itemize}
Assume first that $|G_1|=2$. Let $\varphi\in G_1\setminus\{1\}$. From the Frattini argument, $G=RG_1$ and hence $|G:R|=2$. This gives $R\unlhd G$ and hence $\varphi$ acts by conjugation on $R$ as a group automorphism. Now, from~\cite[Lemma~$2.7$]{Bxu} or~\cite[Theorem~1.13]{MMS}, we have that  
\begin{enumerate}[(a)]
\item the number of $\varphi$-invariant inverse-closed subsets of $R$ is at most $2^{{\bf c}(R)-\frac{|R|}{96}}$, or
\item $R$ is abelian of exponent greater than $2$ and $\varphi$ is the automorphism of $R$ mapping each element to its inverse, or
\item $R$ is generalized dicyclic and $\varphi$ is an automorphism of $R$ with $x^\varphi\in \{x,x^{-1}\}$, for every $x\in R$.
\end{enumerate}
In particular, the proof of Theorem~\ref{thrm:main3} follows in this case.

Assume next that $G$ contains an elementary abelian normal $2$-subgroup $N$ with $|N:G_1|=2$. Since $R$ acts transitively, $G=RN$. Moreover, since $R$ acts regularly, $G=RG_1$ and $R\cap G_1=1$. Thus $|R\cap N|=|N|/|G_1|=2$. Let $r$ be a generator of $R\cap N$. Since $\langle r\rangle=R\cap N\unlhd R$, $r$ is a central involution of $R$. Let $U:={\bf N}_R(G_1)$. Since ${\bf N}_R(G_1)$ is a block of imprimitivity for $G$, $U={\bf N}_R(G_1)$ is also a block of imprimitivity for the regular action of $R$ and hence $U$ is a subgroup of $R$. As $G_1\ne1 $ because $R$ is properly contained in $G$, we deduce that $U$ is a proper subgroup of $R$. Now, $G_1$ fixes pointwise $U$ and, for every $x\in R\setminus U$, we have
$$x^{G_1}=\{x,xr\}.$$
Let $\tau:R\to R$ be the permutation defined by
$$
x\mapsto x^\tau:=
\begin{cases}
x&\textrm{when }x\in U,\\
xr&\textrm{when }x\in R\setminus U.
\end{cases}
$$
We have shown that $S\subseteq R$ is $G_1$-invariant if and only if $S$ is $\langle\tau\rangle$-invariant. Therefore, the proof of this case follows from Lemma~\ref{lemma:diff}.

To conclude the proof of Theorem~\ref{thrm:main3}, it remains to consider the case that $${\bf I}_R(G)\ne 1.$$ From Theorem~\ref{thrm:main1}, we have ${\bf I}_R(G)\le 5/6$. Recall that ${\bf I}(R)=\{x\in R\mid x^2=1\}.$ We define
\begin{align*}
&a:=|\Omega_{R,1}\cap {\bf I}(R)|,&&b:=|\Omega_{R,1}\cap (R\setminus {\bf I}(R))|,\\
&c:=|\Omega_{R,2}\cap {\bf I}(R)|,&&d:=|\Omega_{R,2}\cap (R\setminus {\bf I}(R))|,\\
e&:=|(R\setminus (\Omega_{R,1}\cup \Omega_{R,2}))\cap {\bf I}(R)|,&&f:=|(R\setminus (\Omega_{R,1}\cup \Omega_{R,2}))\cap (R\setminus {\bf I}(R))|.
\end{align*}
As ${\bf I}_R(G)\le 5/6$, we deduce
\begin{equation}\label{final?}
\frac{|R|}{6}\le|R\setminus(\Omega_{R,1}\cup\Omega_{R,2})|=e+f.
\end{equation}

Let $\iota:R\to R$ be the permutation defined by $x^\iota:=x^{-1}$, for every $x\in R$, and  let $T:=\langle \iota,G_1\rangle$. Now, the number of $G_1$-invariant inverse-closed subsets of $R$ is exactly the number of $T$-invariant subsets of $R$. Moreover, the number of $T$-invariant subsets of $R$ is $2^\kappa$, where $\kappa$ is the number of orbits of $T$ on $R$.

The group $T$ has
\begin{itemize}
\item orbits of cardinality $1$ on $\Omega_{R,1}\cap {\bf I}(R)$,
\item orbits of cardinality $2$ on $\Omega_{R,1}\cap (R\setminus {\bf I}(R))$,
\item orbits of cardinality $2$ on $\Omega_{R,2}\cap {\bf I}(R)$,
\item orbits of cardinality at least $2$ on $\Omega_{R,2}\cap (R\setminus {\bf I}(R))$,
\item orbits of cardinality at least  $3$ on $(R\setminus(\Omega_{R,1}\cup\Omega_{R,2}))\cap {\bf I}(R)$,
\item orbits of cardinality at least $4$ on $(R\setminus(\Omega_{R,1}\cup\Omega_{R,2}))\cap (R\setminus {\bf I}(R))$.
\end{itemize}
All of these assertions are trivial except, possibly, the last one. Indeed, if $x\in (R\setminus(\Omega_{R,1}\cup\Omega_{R,2}))\cap (R\setminus {\bf I}(R))$, then $x$ is not an involution and the $G_1$-orbit $x^{G_1}$ has cardinality at least $3$. As
$$(x^{G_1})^{-1}=(x^{-1})^{G_1},$$
we deduce that $|x^T|$ has even cardinality and hence $|x^T|$ is at least $4$.

Summing up, we have
\begin{align*}
\kappa&\le a+\frac{b}{2}+\frac{c}{2}+\frac{d}{2}+\frac{e}{3}+\frac{f}{4}=a+c+e+\frac{b}{2}+\frac{d}{2}+\frac{f}{2}-\left(\frac{c}{2}+\frac{2e}{3}+\frac{f}{4}\right)\\
&=\frac{|R|+|{\bf I}(R)|}{2}-\left(\frac{c}{2}+\frac{2e}{3}+\frac{f}{4}\right)
={\bf c}(R)-\left(\frac{c}{2}+\frac{2e}{3}+\frac{f}{4}\right)\\
&\le{\bf c}(R)-\left(\frac{2e}{3}+\frac{f}{4}\right)\le {\bf c}(R)-\left(\frac{e}{4}+\frac{f}{4}\right)\\
&\le{\bf c}(R)-\frac{|R|}{24},
\end{align*}
where in the last inequality we have used~\eqref{final?}. This concludes the proof of Theorem~\ref{thrm:main3}.

\thebibliography{10}
\bibitem{babai11}L.~Babai, Finite digraphs with given regular automorphism groups, \textit{Periodica Mathematica
Hungarica} \textbf{11} (1980), 257--270.

\bibitem{BaGo}L.~Babai, C.~D.~Godsil, On the automorphism groups of almost all Cayley graphs, \textit{European J. Combin.} \textbf{3} (1982), 9--15.

\bibitem{BL}G.~M.~Bergman, H.~W.~Lenstra Jr., Subgroups close to normal subgroups, \textit{J.  Algebra} \textbf{127} (1989), 80--97.

\bibitem{magma} W.~Bosma, J.~Cannon, C.~Playoust,
The Magma algebra system. I. The user language,
\textit{J. Symbolic Comput.} \textbf{24} (3-4) (1997), 235--265.

\bibitem{dixonmortimer}J. D. Dixon, B. Mortimer, \textit{Permutation groups}, Graduate Texts in Mathematics, Springer-Verlag, New York, 1996.

\bibitem{fitzpatrick}P.~Fitzpatrick, Groups in which an automorphism inverts precisely half the elements, \textit{Proc. Roy. Irish Acad. Sect. A} \textbf{86} (1986) 81--89.

\bibitem{Go2}C.~D.~Godsil, On the full automorphism group of a graph, \textit{Combinatorica} \textbf{1} (1981), 243--256.

\bibitem{God}C.~D. Godsil, GRRs for nonsolvable groups, \textit{Algebraic Methods in Graph Theory,} (Szeged, 1978), 221--239, \textit{Colloq. Math. Soc. J\'{a}nos Bolyai} \textbf{25}, North-Holland, Amsterdam-New York, 1981.

\bibitem{hegarty}P.~Hegarty, D.~MacHale, Two-groups in which an automorphism inverts precisely half the elements, \textit{Bull. London Math. Soc.} \textbf{30} (1998), 129--135.

\bibitem{Het} D. Hetzel, \"{U}ber regul\"{a}re graphische Darstellung von aufl\"{o}sbaren Gruppen. Technische Universit\"{a}t, Berlin, 1976.

\bibitem{Im1} W. Imrich, Graphen mit transitiver Automorphismengruppen, \textit{Monatsh. Math.} \textbf{73} (1969), 341--347.

\bibitem{Im2} W. Imrich, Graphs with transitive abelian automorphism group, \textit{Combinat. Theory (Proc. Colloq. Balatonf\"{u}red, 1969}, Budapest, 1970, 651--656.

\bibitem{Im3} W. Imrich, On graphs with regular groups, \textit{J. Combinatorial Theory Ser. B.} \textbf{19} (1975), 174--180.

\bibitem{isaacs}I.~M.~Isaacs, 
Subgroups close to all of their conjugates, \textit{Arch. Math. (Basel)} \textbf{55} (1990), 1--4.

\bibitem{LieMac}H.~Liebeck, D.~MacHale, Groups with Automorphisms Inverting most Elements, \textit{Math. Z.} \textbf{124} (1972), 51--63.

\bibitem{Miller}G.~A.~Miller, Groups containing the largest possible number of operators of order two, \textit{Amer.
Math. Monthly }\textbf{12} (1905), 149--151.
\bibitem{MMS}J.~Morris, M.~Moscatiello, P.~Spiga, Asymptotic enumeration of Cayley graphs,  \textit{Ann. Mat. Pura Appl.}, to appear.

\bibitem{MSV}J. Morris, P. Spiga, G. Verret, Automorphisms of Cayley graphs on generalised dicyclic groups, \textit{European J. Combin. }\textbf{43} (2015), 68--81.

\bibitem{MSMS}J.~Morris, P.~Spiga, Asymptotic enumeration of Cayley digraphs, \textit{Israel J. Mathematics},  \textit{Israel J. Math.} \textbf{242} (2021), 401--459.

\bibitem{MZ}M.~Muzychuk, P.~H.~Zieschang, On association schemes all elements of which have valency $1$ or $2$, \textit{Discrete Math. }\textbf{308} (2008), 3097--3103.

\bibitem{NW1}L. A. Nowitz, M. Watkins, Graphical regular representations of direct product of groups, \textit{Monatsh. Math. }\textbf{76} (1972), 168--171.

\bibitem{NW2}L. A. Nowitz, M. Watkins, Graphical regular represntations of non-abelian groups, II, \textit{Canad. J. Math. }\textbf{24} (1972), 1009--1018.

\bibitem{NW3}L. A. Notwitz, M. Watkins, Graphical regular representations of non-abelian groups, I, \textit{Canad. J. Math. }\textbf{24} (1972), 993--1008.

\bibitem{Bxu}P.~Spiga, On the equivalence between a conjecture of Babai-Godsil and a conjecture of Xu concerning the enumeration
of Cayley graphs, \textit{The Art of Discrete and Applied Mathematics} \textbf{4} (2021), \href{https://doi.org/10.26493/2590-9770.1338.0b2s}{https://doi.org/10.26493/2590-9770.1338.0b2s}.

\bibitem{Wall}C.~T.~C.~Wall, On groups consisting mostly of involutions, \textit{Proc. Cambridge Philos. Soc.} \textbf{67} (1970),
251--262.

\bibitem{Wat} M. E. Watkins, On the action of non-abelian groups on graphs, \textit{J. Combin. Theory} \textbf{11} (1971), 95--104.

\end{document}